\documentclass[11pt]{amsart}
\usepackage{amsmath, amsthm, amsfonts, amssymb}
\usepackage{mathtools}
\usepackage{enumitem}

\usepackage{tikz}
\usepackage{tikz-cd}
\usepackage[all]{xy}

\usepackage[
urlcolor=blue, linktocpage, hyperindex=true, colorlinks=true,
linkcolor=blue, citecolor=blue,
]{hyperref}

\usepackage{color}
\theoremstyle{plain}
\newtheorem{thm}{Theorem}[section]

\newtheorem{conjecture}[thm]{Conjecture}
\newtheorem{conj}[thm]{Conjecture}
\newtheorem{cor}[thm]{Corollary}
\newtheorem{lem}[thm]{Lemma}
\newtheorem{lemma}[thm]{Lemma}
\newtheorem{prop}[thm]{Proposition}

\theoremstyle{definition}

\newtheorem{rmk}[thm]{Remark}

\setlength{\marginparwidth}{2cm}

 \usepackage{todonotes}

\newcommand{\alb}{\mathrm{alb}}
\newcommand{\Alb}{\mathrm{Alb}}
\newcommand{\alg}{\mathrm{alg}}
\newcommand{\CH}{\mathrm{CH}}
\newcommand{\hhom}{\mathrm{hom}}
\newcommand{\id}{\mathrm{id}}
\newcommand{\im}{\mathrm{Im}}

\newcommand{\Pic}{\mathrm{Pic}}
\newcommand{\rat}{\mathrm{rat}}
\newcommand{\supp}{\mathrm{supp}}
\newcommand{\Sym}{\mathrm{Sym}}
\newcommand{\tr}{\mathrm{tr}}

\newcommand{\mo}{\mathcal{O}}

\newcommand{\Aut}{\mathrm{Aut}}

\newcommand{\CC}{\mathbb{C}}
\newcommand{\QQ}{\mathbb{Q}}

\newcommand{\ZZ}{\mathbb{Z}}
\newcommand{\PP}{\mathbb{P}}

\author{Jiabin Du}
\author{Wenfei Liu}
\title{On symplectic automorphisms of elliptic surfaces acting on $\CH_0$}
	\date{\today}
	\address{Xiamen University  \\ School of Mathematical Sciences \\ Siming South Road 422 \\ Xiamen, Fujian 361005 (China)}
\email{jiabindu@stu.xmu.edu.cn}
\email{wliu@xmu.edu.cn}
\begin{document}
	\begin{abstract}
Let $S$ be a complex smooth projective surface of Kodaira dimension one. We show that the group $\Aut_s(S)$ of symplectic automorphisms acts trivially on the Albanese kernel $\CH_0(S)_\alb$ of the $0$-th Chow group $\CH_0(S)$, unless possibly if the geometric genus and  the irregularity satisfy $p_g(S)=q(S)\in\{1,2\}$. In the exceptional cases, the image of the homomorphism $\Aut_s(S)\rightarrow \Aut(\CH_0(S)_\alb)$ has order at most 3.
	
	Our arguments actually take care of the group $\Aut_f(S)$ of fibration-preserving automorphisms of elliptic surfaces $f\colon S\rightarrow B$. We prove that, if $\sigma\in\Aut_f(S)$ induces the trivial action on $H^{i,0}(S)$ for $i>0$, then it induces the trivial action on $\CH_0(S)_\alb$. As a by-product we obtain that if $S$ is an elliptic K3 surface, then $\Aut_f(S)\cap \Aut_s(S)$ acts trivially on $\CH_0(S)_\alb$.
\end{abstract}
	\maketitle

\section{Introduction}
We work over the complex numbers $\CC$ in this paper. 

For a complex smooth projective variety $X$ of dimension $d$, its $k$-th Chow group $\CH_k(X)$ is defined to be $Z_k(X)/\sim_\rat$, where $Z_k(X)$ is the free abelian group on the $k$-dimensional closed subvarieties of $X$ and $\sim_\rat$ denotes the rational equivalence. We have $\CH_d(X)=\ZZ[X]\cong\ZZ$ and $\CH_{d-1}(X) = \Pic(X)$. However, $\CH_{k}(X)$ becomes very hard to compute for $0\leq k\leq d-2$. For example, for a smooth projective surface $S$ with $p_g(S)>0$, the degree zero part $\CH_0(S)_{\hhom}$ of $\CH_0(S)$ is infinite dimensional \cite{Mum69}. \footnote{This means that  the natural map $\Sym^n(S)\times \Sym^n(S)\to \CH_0(S)_\hhom, (A,B)\mapsto [A-B]$ is not surjective for any natural number $n$, where $\Sym^n(S)$ denotes the $n$-th symmetric product of $S$.}

The Bloch--Beilinson conjecture predicts the existence of a finite decreasing filtration on each Chow group $\CH_k(X)_\QQ$ with rational coefficients whose graded pieces are, in terms of correspondence between smooth projective varieties, controlled by the Hodge decomposition of the cohomology groups; see \cite[Section 11.2.2]{Voi03} for precise statements. The philosophy behind this conjecture is that the topology (Hodge theory) determines the algebraic geometry of  cycles.  

Specifically for the $0$-th Chow group, one defines the kernel of the degree map
\[
\CH_0(X)_\hhom:= \ker  (\CH_0(X)\xrightarrow{\deg} \ZZ)
\]
and in turn the kernel of the Albanese map
\[
\CH_0(X)_\alb:= \ker  (\CH_0(X)_\hhom\xrightarrow{\alb} \Alb(X))
\]
Then $ \CH_0(X)\supset\CH_0(X)_\hhom\supset\CH_0(X)_\alb$ are expected to be the first three terms of the Bloch--Beilinson filtration for $\CH_0(X)$, and if $X=S$ is a surface, then this should be the full filtration. As a consequence, one expects the following
\begin{conjecture}\label{conj: sym cor}
	Let $S$ a smooth projective surface. Let $X$ be a smooth projective variety,  and $\Gamma\in \CH^2(X\times S)$ a cycle of codimension $2$. If  $[\Gamma]^{\ast}\colon H^{2,0}(S)\to H^{2,0}(X)$ vanishes, then $\Gamma_{\ast}: \CH_0(X)_{\alb}\to \CH_0(S)_{\alb}$ also vanishes.
\end{conjecture}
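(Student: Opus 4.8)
The plan is to isolate the part of $\CH_0$ that is genuinely controlled by $H^{2,0}$ --- the transcendental motive --- and then to import the Bloch--Beilinson philosophy through the one conditional tool that makes it operational, namely finite-dimensionality of motives. The guiding principle is that $\CH_0(\,\cdot\,)_\alb$ is the Chow group of the transcendental motive $t_2(\,\cdot\,)$, whose cohomological realization is the transcendental lattice $H^2_{\tr}(\,\cdot\,)$, and that $H^2_{\tr}(S)$ is by definition the smallest sub-Hodge structure of $H^2(S)$ whose complexification contains $H^{2,0}(S)$. Under this principle the hypothesis $\Gamma^\ast=0$ on $H^{2,0}(S)$ becomes exactly the statement that the transcendental realization of $\Gamma$ vanishes, and the desired conclusion should follow from a ``homologically trivial implies Chow-trivial'' mechanism. (Note that the reverse implication is the classical direction of Mumford's argument, so the hypothesis is the sharp cohomological obstruction; the conjecture asserts that it is the only one.)

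First I would reduce to the case $\dim X=2$. Choosing a general complete-intersection surface $T=H_1\cap\dots\cap H_{n-2}\hookrightarrow X$ with the $H_i$ very ample, the Lefschetz hyperplane theorem gives $\Alb(T)\xrightarrow{\sim}\Alb(X)$ when $n\geq 3$, and a standard Bertini/moving argument shows that the pushforward $\CH_0(T)_\alb\to\CH_0(X)_\alb$ is surjective; hence it suffices to kill $\Gamma_\ast$ on $\CH_0(T)_\alb$. Setting $\Gamma_T:=\Gamma\cdot[T\times S]\in\CH^2(T\times S)$, one checks by compatibility of refined intersection with restriction to $T$ that $\Gamma_T^\ast$ factors as $H^{2,0}(S)\xrightarrow{\Gamma^\ast}H^{2,0}(X)\to H^{2,0}(T)$, so the vanishing hypothesis is preserved. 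This turns the conjecture into a statement about a correspondence between the two surfaces $T$ and $S$.

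For the surface-to-surface case I would pass to Murre's Chow--K\"unneth decomposition and the associated transcendental motive, so that $\Gamma$ induces $t_2(\Gamma)\colon t_2(S)\to t_2(T)$ with $\CH_0(\,\cdot\,)_\alb=\CH_0(t_2(\,\cdot\,))$. Because $H^2_{\tr}(S)$ is generated as a Hodge structure by $H^{2,0}(S)$, the kernel of the realization of $t_2(\Gamma)$ is a sub-Hodge structure of $H^2_{\tr}(S)$ containing $H^{2,0}(S)$, hence all of it; thus $t_2(\Gamma)$ is homologically trivial. If the motives $t_2(S)$ and $t_2(T)$ are finite-dimensional in the sense of Kimura--O'Sullivan, the nilpotence theorem upgrades this homological triviality to the vanishing of the action on Chow groups, yielding $\Gamma_\ast=0$ on $\CH_0(T)_\alb$ and therefore on $\CH_0(X)_\alb$.

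The main obstacle is precisely this last input: finite-dimensionality of the Chow motive is unknown for a general surface --- already for a general surface of general type, and a fortiori for an arbitrary surface section $T$ of an arbitrary $X$ --- so the argument above proves the conjecture only conditionally. (When $\CH_0$ of the source is supported in low dimension one can instead run an unconditional Bloch--Srinivas decomposition of the diagonal, but this covers a strictly smaller range of cases.) This is why Conjecture~\ref{conj: sym cor} is stated as a conjecture rather than a theorem, and why the unconditional results of this paper are obtained not in this generality but for the specific correspondences $\Gamma=\Gamma_\sigma$ attached to automorphisms of elliptic surfaces, where the elliptic fibration supplies enough structure on $H^{2,0}$ and on $\CH_0$ to carry out the ``homologically trivial implies Chow-trivial'' step by hand.
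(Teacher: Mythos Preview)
There is nothing to compare: the statement is Conjecture~\ref{conj: sym cor}, and the paper does not prove it. It is stated as an open conjecture, specialized to Conjecture~\ref{conj: sym}, and only the latter is attacked, in the restricted setting of elliptic surfaces with $\kappa=1$.

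Your write-up is not a proof but a correct diagnosis of why the statement is conjectural, and you say so explicitly. The reduction to a surface source via complete-intersection sections, the passage to the transcendental motive $t_2$, and the observation that $\Gamma^\ast=0$ on $H^{2,0}(S)$ forces the cohomological realization of $t_2(\Gamma)$ to vanish are all standard and correct; the gap you identify --- that Kimura--O'Sullivan finite-dimensionality is needed to promote homological triviality to nilpotence on Chow groups, and is unknown in general --- is exactly the obstruction. This is consistent with the paper: the one place the paper invokes finite-dimensionality (Lemma~\ref{lem:fdtoSBC}) is precisely for surfaces isogenous to a product, where Kimura's theorem applies, and even there only for the special correspondence $\Gamma_\sigma-\Delta_S$. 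In short, your proposal is an accurate conditional sketch, and the paper offers no unconditional proof against which to measure it.
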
	
Taking $X=S$ and $\Gamma=\Gamma_\sigma-\Delta_S$, where $\Gamma_{\sigma}$ is the graph of an automorphism $\sigma\in \Aut(S)$ and $\Delta_S\subset S\times S$ is the diagonal, we obtain the following special and more tractable case of Conjecture~\ref{conj: sym cor}:
\begin{conj}\label{conj: sym} 
	Let $S$ be a smooth projective surface. Then any symplectic automorphism acts trivially on  $\CH_0(S)_{\alb}$.
\end{conj}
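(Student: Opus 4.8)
The plan is to establish Conjecture~\ref{conj: sym} for surfaces of Kodaira dimension $\kappa(S)=1$, and along the way the assertions about the group $\Aut_f(S)$ of an elliptic surface $f\colon S\to B$; the general type case is beyond present methods. The first reduction: when $\kappa(S)=1$ the fibration $f$ is the Iitaka fibration defined by $|mK_S|$ for $m\gg0$, hence is canonical, is preserved by every automorphism, and $\Aut(S)=\Aut_f(S)$, so it suffices to treat $\Aut_f(S)$. If $p_g(S)=0$ then $S$ is not of general type, so Bloch's conjecture, known in this range by Bloch--Kas--Lieberman, gives $\CH_0(S)_\alb=0$ and there is nothing to prove; hence assume $p_g(S)>0$. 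Then $\chi(\mo_S)=1-q(S)+p_g(S)>0$, so $R^1f_\ast\mo_S$ has negative degree on $B$, which forces $q(S)=g(B)$ and $H^{1,0}(S)=f^\ast H^{1,0}(B)$; in particular the Albanese map of $S$ is $j_B\circ f\colon S\to\Jac(B)=\Alb(S)$, with $j_B$ the Abel--Jacobi map of $B$.

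Next I would reduce $\CH_0(S)_\alb$ to cycles supported on fibres. If $z$ is a degree-$0$ cycle with $\alb(z)=0$, then its push-forward to $B$ has degree $0$ and vanishes in $\Jac(B)=\CH_0(B)_\hhom$, hence is a principal divisor; a standard moving argument along the fibres then shows $z$ is rationally equivalent to a sum of $0$-cycles supported on finitely many smooth fibres, of degree $0$ on each. So $\CH_0(S)_\alb$ is generated by the classes $(i_b)_\ast w$, where $i_b\colon F_b\hookrightarrow S$ is a smooth fibre and $w\in\CH_0(F_b)_\hhom=\Pic^0(F_b)$; and as $\sigma_\ast\bigl((i_b)_\ast w\bigr)=(i_{\bar\sigma(b)})_\ast\bigl((\sigma|_{F_b})_\ast w\bigr)$, the whole problem becomes that of understanding this ``permute-and-twist'' action on the generators, together with the relations among them.

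The symplectic hypothesis controls the twist. For a smooth fibre $F_b$ with $\bar\sigma(b)=b$, the automorphism $\sigma|_{F_b}$ of the elliptic curve $F_b$ is a translation composed with multiplication by a root of unity $\zeta_b$, and the translation part acts trivially on $\Pic^0(F_b)$, so only $\zeta_b$ matters. Via the adjunction isomorphism $\omega_S|_{F_b}\cong\omega_{F_b}\otimes(T_bB)^\vee$, the fact that $\sigma$ fixes a holomorphic two-form restricting nontrivially to $F_b$ — which holds for all but finitely many $b$ — forces $\zeta_b$ to be the root of unity determined by the derivative of $\bar\sigma$ at $b$; in particular $\zeta_b=1$ whenever $\bar\sigma$ is unramified at $b$, and then $\sigma|_{F_b}$ is a translation and $\sigma_\ast\bigl((i_b)_\ast w\bigr)=(i_b)_\ast w$. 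What remains are the fibres genuinely moved by $\bar\sigma$ and the finitely many fibres over ramification points of $\bar\sigma$.

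The crux is to show that the permute-and-twist action descends to the identity on the quotient $\CH_0(S)_\alb$, even though it is nontrivial on $\bigoplus_b\Pic^0(F_b)$. The permutation part should disappear on the quotient: for $w\in\Pic^0(F_b)$ the cycles $(i_b)_\ast w$ and $(i_{\bar\sigma(b)})_\ast(\sigma|_{F_b})_\ast w$ ought to be rationally equivalent on $S$, which I would prove by spreading $w$ to a relatively degree-$0$ line bundle along the $\bar\sigma$-orbit of $b$ — here the finite generation of the pertinent Mordell--Weil-type group (Lang--N\'eron), the triviality of $\sigma$ on the transcendental part of $H^2(S)$, and Roitman's theorem (so that $\CH_0(S)_\alb$ is torsion-free, and an action trivial modulo torsion is trivial) are what should make the argument go through. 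The one genuine obstruction is the twist over a ramification point $b$ of $\bar\sigma$: there $\zeta_b\ne1$, and since $\zeta_b-1$ is then a surjective isogeny of $F_b$, the class $(i_b)_\ast w$ really moves. Determining when such a smooth ramified fibre is compatible with $\sigma$ being symplectic — via a Riemann--Hurwitz bound on the order of $\bar\sigma$ at a fixed point together with the canonical bundle formula — should confine this to $\chi(\mo_S)=1$ and $g(B)\in\{1,2\}$, i.e. precisely $p_g(S)=q(S)=g(B)\in\{1,2\}$, with $\zeta_b$ a primitive cube root of unity (a complex-multiplication-by-$\zeta_3$ phenomenon on the fibre), so that the image of $\Aut_s(S)\to\Aut(\CH_0(S)_\alb)$ can be at most $\ZZ/3\ZZ$. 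I expect the genuinely hard step to be ``trivial modulo the relations among fibre classes implies trivial'', that is, actually producing the rational equivalences between $0$-cycles lying on distinct fibres.
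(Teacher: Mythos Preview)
Your proposal has two genuine gaps, one minor and one central.

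\medskip

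\textbf{Minor gap.} The implication ``$p_g(S)>0$ hence $\chi(\mo_S)>0$'' is false: when $\kappa(S)=1$ there are surfaces with $p_g(S)>0$ and $\chi(\mo_S)=0$ (the relatively minimal model is then an elliptic quasi-bundle, and $q(S)=g(B)+1$, not $g(B)$). The paper treats this case separately, observing that the Jacobian $J$ is then isogenous to a product of curves, so its Chow motive is Kimura finite-dimensional, and a finite-order symplectic automorphism acting trivially on $H^2_{\tr}$ must act trivially on $\CH_0(J)_{\alb}$ by Kimura's nilpotence theorem. Your outline simply omits this case.

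\medskip

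\textbf{Central gap.} Your ``permutation part disappears on the quotient'' step is the entire content of the theorem, and the mechanism you sketch does not carry it. Saying that $(i_b)_*w$ and $(i_{\bar\sigma(b)})_*(\sigma|_{F_b})_*w$ ``ought to be rationally equivalent'' is precisely the assertion $\sigma_*\bigl((i_b)_*w\bigr)=(i_b)_*w$ on a generator, which is generally \emph{false} on individual generators (the map $\bigoplus_b\Pic^0(F_b)\to\CH_0(S)_{\alb}$ has a huge kernel, and $\sigma$ is nontrivial upstairs). Spreading $w$ along the $\bar\sigma$-orbit, Lang--N\'eron finiteness, and triviality of $\sigma$ on $H^2_{\tr}(S)$ do not produce rational equivalences between $0$-cycles on distinct fibres; they are cohomological or arithmetic statements, not cycle-theoretic ones, and Roitman only lets you ignore torsion after you already have the equality modulo torsion. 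You acknowledge this is the ``genuinely hard step'', but as written it is not a proof sketch, it is a wish.

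\medskip

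The paper's route is structurally different and avoids this difficulty entirely. It first disposes of $\Aut_B(S)\cap\Aut_s(S)$ (your translation argument, which is fine), then passes to the Jacobian fibration $j\colon J\to B$: the induced $\sigma_J$ fixes the zero-section, so $|\sigma_J|=|\sigma_B|$ is \emph{finite}. The key new input is the canonical bundle formula: since $\sigma_J$ is symplectic, the canonical map $\varphi_{K_J}$ factors through $B\to B/\langle\sigma_B\rangle$, hence $|\sigma_B|\le\deg\varphi_{K_B+L}$ where $\deg L=\chi(\mo_J)$. This gives $\sigma_B=\id_B$ when $\chi\ge3$, $|\sigma_B|\le2$ when $\chi=2$, and $|\sigma_B|\in\{1,2,3,6\}$ when $\chi=1$. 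Involutions are then killed by an ``enough automorphisms'' trick: on $J$ one has the fibrewise $(-1)$-involution $\tau$, and both $J/\langle\tau\rangle$ and $J/\langle\sigma_J\tau\rangle$ have $p_g=0$ and are not of general type, so $\tau$ and $\sigma_J\tau$ each act as $-\id$ on $\CH_0(J)_{\alb,\QQ}$, forcing $\sigma_J=(\sigma_J\tau)\tau$ to act trivially. This disposes of everything except a possible order-$3$ residue when $\chi=1$ and $g(B)\in\{1,2\}$, which is exactly the $\ZZ/3\ZZ$ exception. Your Riemann--Hurwitz/CM-by-$\zeta_3$ heuristic arrives at the same endpoint but without the machinery to justify it.
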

Here an automorphism $\sigma\in\Aut(S)$ is called \emph{symplectic} if the induced map $\sigma^*: H^{2,0}(S)\rightarrow H^{2,0}(S)$ is the identity. We will use $\Aut_s(S)$ to denote the group of symplectic automorphisms of $S$. 

Surfaces with $\kappa(S)\leq 1$ and $p_g(S)=0$ have trivial $\CH_0(S)_\alb$ by \cite{BKL76}, so Conjecture~\ref{conj: sym} is automatically true in this case. The Bloch conjecture (\cite[Conjecture~1.8 and Proposition~1.11]{Blo10}), which is again a consequence of Conjecture~\ref{conj: sym cor} (take $\Gamma=\Delta_S$), asserts that surfaces of general type with $p_g(S)=0$ also have trivial $\CH_0(S)_\alb$. This has been verified in some special cases by various authors: surfaces with ``enough automorphisms" such as Godeaux surfaces, Burniat-Inoue surfaces, Campedelli surfaces, and their alike \cite{IM79, Bar85,Che13, BC13, Bau14, BF15}, surfaces with ``finite dimensional Chow motives'' such as surfaces rationally dominated by a product of curves \cite{Kim05}, surfaces with ``nice moduli spaces'' such as Catanese surfaces, Barlow surfaces \cite{Voi14} and some numerical Campedelli surfaces \cite{Lat20}, and so on. Since the complete classification for surfaces of general type with $p_g=0$ is still unknown and the effective methods avoiding the classification results of such surfaces are not established, the Bloch conjecture is open by now (\cite{BCP11}).

For surfaces with $p_g(S)>0$, the Albanese kernel $\CH_0(S)_\alb$ is huge (\cite{Mum69}). Nevertheless, Conjecture~\ref{conj: sym} has been confirmed for abelian surfaces \cite{BKL76, Paw19} as well as symplectic automorphisms of finite order of K3 surfaces \cite{Voi12, Huy12}. For Kummer K3 surfaces, infinite order symplectic automorphisms coming from the covering abelian surface are treated in \cite{Paw19}.

The main result of this paper is
\begin{thm}\label{thm: main1}
	Conjecture~\ref{conj: sym} holds for surfaces $S$ with Kodaira dimension $\kappa(S)=1$, unless possibly $q(S)=p_g(S)\in\{1,2\}$. In these cases, the image of the homomorphism $\Aut_s(S)\rightarrow \Aut(\CH_0(S)_\alb)$ has order at most $3$.
\end{thm}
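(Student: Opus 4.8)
The plan is to reduce to fibration preserving automorphisms and then to the action on holomorphic $1$-forms. Since $\kappa(S)=1$, the Iitaka fibration $f\colon S\to B$ onto a smooth projective curve $B$ is the unique elliptic fibration on $S$ and is preserved by every automorphism, so $\Aut(S)=\Aut_f(S)$ and a fortiori $\Aut_s(S)\subseteq\Aut_f(S)$. Granting the criterion for fibration preserving automorphisms stated above --- if $\sigma\in\Aut_f(S)$ acts trivially on $H^{i,0}(S)$ for all $i>0$, then it acts trivially on $\CH_0(S)_\alb$ --- and the fact that a symplectic automorphism acts trivially on $H^{2,0}(S)$ by definition, we are reduced to analysing the $\sigma$-action on $H^{1,0}(S)$ for $\sigma\in\Aut_s(S)$. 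If $q(S)=0$ this group vanishes and the criterion applies; and the case $p_g(S)=0$ is covered by \cite{BKL76} (there $\CH_0(S)_\alb=0$). So from now on $q(S)\geq 1$ and $p_g(S)\geq 1$.

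For $\sigma\in\Aut_f(S)$ let $\bar\sigma\in\Aut(B)$ be the induced automorphism and $\zeta(\sigma)\in\CC^{\times}$ the scalar by which $\sigma$ acts on the holomorphic $1$-form of a general fibre; it is a root of unity, equal to $1$ unless $\sigma$ induces a nontrivial automorphism of the fibres. There is an exact sequence $0\to f^{\ast}H^{1,0}(B)\to H^{1,0}(S)\to W\to 0$ in which $W$ embeds into $H^0(B,L)$ and has dimension $h^0(B,L^{-1})$, where $L=f_{\ast}\omega_{S/B}$ is the Hodge line bundle of degree $\chi(\mathcal{O}_S)=1-q(S)+p_g(S)\geq 0$; in particular $W=0$ unless $L=\mathcal{O}_B$. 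Moreover $\sigma$ acts on $f^{\ast}H^{1,0}(B)$ through $\bar\sigma^{\ast}$ and on $W$ through $\bar\sigma^{\ast}$ twisted by $\zeta(\sigma)$, while the canonical bundle formula gives $H^{2,0}(S)=H^0(B,\,\omega_B\otimes L)$ and identifies $\sigma^{\ast}$ there with $\zeta(\sigma)\cdot\bar\sigma^{\ast}$. Hence $\sigma$ symplectic forces $\bar\sigma^{\ast}=\zeta(\sigma)^{-1}\cdot\id$ on $H^0(B,\omega_B\otimes L)$, so that $\bar\sigma$ acts as the identity on the image of the morphism $\varphi\colon B\to\PP\bigl(H^0(B,\omega_B\otimes L)^{\vee}\bigr)$, which composed with $f$ is the canonical map of $S$.

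If $\varphi$ is birational onto its image, then $\bar\sigma$ fixes a general point of $B$, hence $\bar\sigma=\id_B$; substituting back gives $\zeta(\sigma)=1$, so $\sigma$ acts trivially on $H^{1,0}(S)$ and we conclude by the criterion. By the theory of linear systems on curves, $\varphi$ fails to be birational onto its image (or is altogether degenerate, when $p_g(S)\leq 1$) only if the complete linear system $|\omega_B\otimes L|$ is composed with a pencil on $B$; one checks that this forces $\chi(\mathcal{O}_S)\leq 2$, i.e. $|p_g(S)-q(S)|\leq 1$, and that $\bar\sigma$ then preserves the pencil and so has finite order. In these borderline cases one must instead argue directly through the fibration; carrying this out, all of them yield the desired triviality except when the pencil --- a $g^1_2$, a $g^1_3$, or the degenerate system $|\mathcal{O}|$ --- allows $\bar\sigma$, hence $\sigma$, to act on $H^{1,0}(S)$ by a rotation of exact order $3$, which is possible precisely when $p_g(S)=q(S)\in\{1,2\}$.

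In that exceptional range $\dim H^{1,0}(S)\in\{1,2\}$ and the possible nontrivial $\sigma$-actions on $H^{1,0}(S)$ have order $2$, $3$, $4$ or $6$. To finish, the plan is to trivialise the contributions of order a power of $2$ --- which do occur, e.g. through a hyperelliptic base $B$ --- by passing to the associated double cover, respectively quotient, of $S$ (again an elliptic surface of Kodaira dimension at most $1$, now lying outside the exceptional range, to which the preceding cases apply), leaving only an order-$3$ rotation as a possible obstruction; then the image of $\Aut_s(S)\to\Aut(\CH_0(S)_\alb)$ is cyclic of order dividing $3$. The main difficulty is exactly this exceptional range: there the canonical map of $S$ gives no control over $\bar\sigma$, the symplectic hypothesis no longer forces triviality on $H^{1,0}(S)$, the black-box criterion does not apply, and one is driven to a hands-on study of $\CH_0(S)_\alb$ --- through the Albanese fibration, the multiple fibres, and the quotients of $S$ by the prime-power subgroups of $\langle\sigma\rangle$ --- in order to isolate the genuine order-$3$ phenomenon and bound it by $\ZZ/3\ZZ$.
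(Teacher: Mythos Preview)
Your opening move---that $\kappa(S)=1$ forces $\Aut(S)=\Aut_f(S)$ and that the canonical map factors as $S\xrightarrow{f}B\xrightarrow{\varphi_{|K_B+L|}}\PP^{p_g-1}$, so that for $\chi(\mo_S)\geq 3$ the map $\varphi_{|K_B+L|}$ is an embedding and $\bar\sigma=\id_B$---is exactly the paper's argument for that range. The trouble begins immediately afterwards.

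First, you invoke the criterion ``$\sigma$ trivial on $H^{i,0}(S)$ for $i>0$ $\Rightarrow$ $\sigma$ trivial on $\CH_0(S)_\alb$'' as a black box. In the paper this criterion (Theorem~\ref{thm: main2}) is \emph{not} proved independently; its proof quotes the $\chi\leq 2$ theorems that together constitute Theorem~\ref{thm: main1}. So you cannot use it to shortcut those cases.

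Second, and more seriously, the sentence ``in these borderline cases one must instead argue directly through the fibration; carrying this out, all of them yield the desired triviality'' is not an argument. Concretely: when $\chi(\mo_S)=2$ and $g(B)\geq 1$, the map $\varphi_{|K_B+L|}$ has degree at most~$2$, so $\bar\sigma$ can genuinely be a nontrivial involution (e.g.\ hyperelliptic), and then $\sigma$ does \emph{not} act trivially on $H^{1,0}(S)$. The same happens for $\chi=1$, $g(B)\geq 3$. Your criterion simply does not apply here, and nothing you wrote replaces it. The paper handles these involutions by two ingredients you are missing: (i) a reduction to the Jacobian fibration $j\colon J\to B$, which turns $\sigma$ into a \emph{finite-order} $\sigma_J$ with $|\sigma_J|=|\sigma_B|$ fixing the zero-section; and (ii) an Inose--Mizukami ``enough automorphisms'' argument: on $J$ one has the fibrewise $(-1)$-involution $\tau$, the group $\langle\sigma_J,\tau\rangle\cong(\ZZ/2\ZZ)^2$ acts, and the quotients by $\tau$ and $\sigma_J\tau$ have $p_g=0$, forcing $\sigma_J$ to act trivially on $\CH_0(J)_\alb$. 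This is the mechanism that kills order-$2$ phenomena throughout, including in the exceptional range where it disposes of $\sigma_J^3$ and leaves only the possible $\ZZ/3\ZZ$. Your proposed ``double cover, respectively quotient'' manoeuvre does not do this job: without the Jacobian reduction $\sigma$ need not have finite order, so there is no quotient to take, and a double cover of $S$ has no evident relation to the problem.

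Third, you do not address $\chi(\mo_S)=0$ at all. There $S$ (or rather $J$) is a quasi-bundle, hence dominated by a product of curves, and the paper invokes Kimura finite-dimensionality of the Chow motive to conclude. No version of your $H^{1,0}$ analysis or canonical-map bound reaches this case.
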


Strengthening the condition of Conjecture~\ref{conj: sym} (compare \cite[Conjecture 1.1]{Voi12}), we have 
\begin{thm}\label{thm: main2}
	Let $S$ be a smooth projective surface with $\kappa(S)=1$. If an automorphism $\sigma\in \Aut(S)$ induces the trivial action on $H^{i,0}(S)$ for $i>0$, then it induces trivial action on $\CH_0(S)_\alb$.
\end{thm}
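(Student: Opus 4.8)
The plan is to descend to the canonical elliptic fibration and argue fibre by fibre. First I would reduce to the case that $S$ is minimal: both $\CH_0(\,\cdot\,)_\alb$ and the numbers $h^{i,0}$ are birational invariants of smooth projective surfaces, and every automorphism descends to the (unique) minimal model. If $p_g(S)=0$ then $\CH_0(S)_\alb=0$ by \cite{BKL76} and there is nothing to prove, so I may assume $p_g(S)>0$. Since $\kappa(S)=1$, the Iitaka fibration is a relatively minimal elliptic fibration $f\colon S\to B$ onto a smooth curve; it is intrinsic to $S$, hence preserved by $\sigma$, which therefore induces $\bar\sigma\in\Aut(B)$ with $f\circ\sigma=\bar\sigma\circ f$. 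The pullback $f^\ast\colon H^0(B,\Omega^1_B)\hookrightarrow H^0(S,\Omega^1_S)$ is $\sigma$-equivariant and $\sigma$ acts trivially on the target, so $\bar\sigma$ acts trivially on $H^0(B,\Omega^1_B)$; by the Lefschetz fixed-point formula this forces $\bar\sigma=\id_B$ when $g(B)\ge 2$ and $\bar\sigma$ to be a translation when $g(B)=1$, and in every relevant case $\bar\sigma$ has finite order.

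The core case is $f$ non-isotrivial with $\bar\sigma=\id_B$. Non-isotriviality makes the monodromy on $R^1f_\ast\QQ$ non-trivial, so this local system has no holomorphic $(1,0)$-invariant and $H^0(S,\Omega^1_S)=f^\ast H^0(B,\Omega^1_B)$, i.e. $\alb_S$ factors through $f$ and $\Alb(S)=\Jac(B)$. I would then show $\sigma$ acts as a translation on every smooth fibre: $\sigma$ preserves $\omega_{S/B}$, hence acts on the line bundle $f_\ast\omega_S$ on $B$ over $\bar\sigma=\id_B$ by a scalar $c\in\CC^\ast$, and reading this off at a general fibre $E_b$ identifies $c$ with the eigenvalue $\zeta$ of $(\sigma|_{E_b})^\ast$ on $H^0(E_b,\omega_{E_b})$; on global sections $c$ is the scalar by which $\sigma^\ast$ acts on $H^0(S,\omega_S)$, which is $1$ by hypothesis, so (using $p_g(S)>0$) $\zeta=1$ and $\sigma|_{E_b}$ is a translation.

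With a section $\sigma_0\colon B\to S$ in hand, consider $g\colon S\to\Alb(S)$ defined by $g(x)=\alb_S(x)-\alb_S(\sigma_0 f(x))$. Using $H^0(S,\Omega^1_S)=f^\ast H^0(B,\Omega^1_B)$ one checks $g^\ast=0$ on $1$-forms (if $\alpha=f^\ast\beta$ then $\sigma_0^\ast\alpha=\beta$, so $f^\ast\sigma_0^\ast\alpha=\alpha$), hence $g$ is constant, equal to $g(\sigma_0(b))=0$; thus the subgroup $V\subseteq\CH_0(S)_\hhom$ generated by the classes $[x]-[\sigma_0(f(x))]$ lies in $\CH_0(S)_\alb$. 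On the other hand $\alb_S\circ\sigma_{0\ast}\colon\Jac(B)\to\Alb(S)$ is split injective (as $f\circ\sigma_0=\id_B$), so $\sigma_{0\ast}\Jac(B)\cap\CH_0(S)_\alb=0$, while fibrewise $\CH_0(S)_\hhom=V+\sigma_{0\ast}\Jac(B)$; therefore $\CH_0(S)_\alb=V$. Finally, for $v=[x]-[\sigma_0(f(x))]$ the cycle $\sigma_\ast v-v$ is supported on the single fibre $E_{f(x)}$, has degree $0$ there, and is trivial in $\Pic^0(E_{f(x)})$ because $\sigma|_{E_{f(x)}}$ is a translation; hence $\sigma_\ast v=v$, and $\sigma$ acts trivially on $\CH_0(S)_\alb$.

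It remains to treat three residual situations, which I expect to be where the real work lies. If $f$ has no section I would replace it by its relative Jacobian $J\to B$ (relatively minimal, non-isotrivial, with a section, and carrying the automorphism $\sigma_J$ induced on the Jacobian of the generic fibre), apply the core case to $\sigma_J$, and transport the conclusion back using that a multisection of $f$ produces a dominant rational map to $S$ from a base change of $J$, or equivalently that a torsor and its Jacobian have isomorphic transcendental motives. If $\bar\sigma\ne\id_B$ then $g(B)\le 1$ and $\bar\sigma$ has finite order $n$; applying the previous cases to $\sigma^n$ shows $\sigma_\ast$ has finite order on $\CH_0(S)_\alb$, and ruling out a non-trivial such action requires a more delicate fibrewise argument along the $\bar\sigma$-orbits. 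The hardest case is the isotrivial one: then $S$ is birational to $(E\times C)/G$ and is dominated by $E\times C$, so its Chow motive is finite-dimensional and its transcendental part is a direct summand of $h^1(E)\otimes h^1(C)$; here one cannot reduce to fibrewise translations, and must instead show that the hypotheses on $H^{1,0}(S)$ and $H^{2,0}(S)$ force $\sigma$ to act trivially on the relevant isotypic pieces of $H^1(E)\otimes H^1(C)$ — this is precisely the point at which the extra assumption ``$\sigma^\ast=\id$ on $H^{1,0}(S)$'' rules out the $\ZZ/3\ZZ$-phenomenon of Theorem~\ref{thm: main1}.
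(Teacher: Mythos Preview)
Your core argument --- once $\bar\sigma=\id_B$, the symplectic hypothesis forces $\sigma$ to act as a translation on each smooth fibre and hence trivially on $\CH_0(S)_\alb$ --- is correct and is essentially the paper's Proposition~\ref{prop: fib action} (proved there more simply, without assuming a section or non-isotriviality, via the decomposition of Lemma~\ref{lem: fib cycle}). Your reduction to the Jacobian is likewise the content of Section~\ref{sec: red to J}.

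The genuine gap is the case $\bar\sigma\neq\id_B$, which you explicitly leave open. Two separate issues arise. For $g(B)=1$ you correctly note that $\bar\sigma$ must be a translation, but you never exclude a nontrivial one; the paper's fix is short: in the remaining range $\chi(\mo_S)>0$, so the holomorphic Lefschetz fixed point formula gives $S^\sigma\neq\varnothing$, whence $\bar\sigma$ fixes the nonempty set $f(S^\sigma)$ and, being a translation, equals $\id_B$. For $g(B)=0$ your hypothesis on $H^{1,0}$ is vacuous and gives no control on $\bar\sigma$ whatsoever. Here the paper does \emph{not} use the $H^{1,0}$ assumption at all; it instead invokes Theorem~\ref{thm: main1}, whose proof stratifies by $\chi(\mo_S)$: for $\chi\geq 3$ the canonical map of $S$ factors through an embedding of $B$, forcing $\sigma_B=\id_B$; for $\chi=2$ one passes to the Jacobian, a K3 surface, and cites \cite{Voi12,Huy12}; for $\chi=1$ with $q=0$ one has $p_g=0$ and $\CH_0(S)_\alb=0$. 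None of this is recoverable from a ``fibrewise argument along $\bar\sigma$-orbits'', and merely knowing that $\sigma_*$ has finite order on the torsion-free group $\CH_0(S)_\alb$ is far from enough.

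You have also misplaced the $\ZZ/3\ZZ$ obstruction. It does not live in the isotrivial case: when $\chi(\mo_S)=0$ the surface is dominated by a product of curves, its motive is Kimura finite-dimensional, and Lemma~\ref{lem:fdtoSBC} settles the matter using only the symplectic hypothesis, with no need for $H^{1,0}$. The possible $\ZZ/3\ZZ$ in Theorem~\ref{thm: main1} arises instead when $\chi(\mo_S)=1$ and $g(B)=q(S)=p_g(S)\in\{1,2\}$, and it is precisely (and only) there that the paper uses the extra hypothesis on $H^{1,0}(S)$, combined with the Lefschetz argument above when $g(B)=1$, to force $\sigma_B=\id_B$ and then conclude via Proposition~\ref{prop: fib action}.
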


The results are based on the following elementary observation about the zero cycles of an elliptic surface $f\colon S\rightarrow B$: the Albanese kernel $\CH_0(S)_\alb$ is contained in the so-called \emph{$f$-kernel}
\[
\CH_0(S)_f:=\ker(\CH_0(S)\xrightarrow{f_*} \CH_0(B)).
\]
For any cycle class $\alpha\in \CH_0(S)_f$, one can find a positive integer $d$ and finitely many $\alpha_i\in \CH_0(S)_\hhom$ such that 
\begin{equation}\label{eq: dec}
	d\alpha = \sum \alpha_i\in \CH_0(S),
\end{equation}
and the support $\supp(\alpha_i)$ lies on a single fiber for each $i$ (Lemma~\ref{lem: fib cycle}).

Let $\Aut_f(S)$ be the subgroup of automorphisms of $S$ preserving the fibration structure $f$; for the precise definition, see Section $2$. What we are dealing with in this paper is in fact the group $\Aut_f(S)\cap \Aut_s(S)$. Note that, if $\kappa(S) =1$, then there is a unique elliptic fibration structure on $S$ and hence $\Aut_f(S) = \Aut(S)$; it follows that  $\Aut_f(S)\cap \Aut_s(S)$ is the whole $\Aut_s(S)$.

In any case, we have an exact sequence
\[
1\rightarrow \Aut_B(S) \rightarrow \Aut_f(S) \rightarrow \Aut(B)
\]
where $\Aut_B(S):=\{\sigma\in \Aut_f(S) \mid f\circ \sigma =f\}$. By the decomposition~\eqref{eq: dec}, it is clear that $\sigma\in \Aut_B(S)$ induces the trivial action on $\alpha\in\CH_0(S)_\alb$ if the restriction $\sigma|_F$ to a general fiber $F$ of $f$ is a translation. The latter property is guaranteed if $p_g(S)>0$; see Lemma~\ref{lem: fib trans}. On the other hand, if $p_g(S) = 0$, then $\CH_0(S)_\alb =0$ by \cite{BKL76} and there is nothing to prove. The conclusion is that $\Aut_B(S)\cap \Aut_s(S)$ acts trivially on $\CH_0(S)_\alb$ (Proposition~\ref{prop: fib action}).

Note that $\Aut_B(S)\cap \Aut_s(S)$ also acts trivially on the Jacobian $j\colon J\rightarrow B$. Using this fact, we can reduce the problem to one for finite order symplectic automorphisms of $J$ that fix the distinguished section. This is carried out in Section~\ref{sec: red to J}.

Replacing the elliptic fibration $f\colon S\rightarrow B$ with its Jacobian $j\colon J\rightarrow B$ and $\sigma\in \Aut_f(S)\cap \Aut_s(S)$ with its induced automorphism $\sigma_J\in \Aut_j(J)\cap \Aut_s(J)$, we can assume that $S$ has a section, and then work only with those fibration-preserving symplectic automorphisms $\sigma$ of  finite order such that $|\sigma|= |\sigma_B|$, where $\sigma_B\in \Aut(B)$ is induced by $\sigma$.

Here we observe that, in case $p_g(S)>0$, the canonical map $\varphi_{S}\colon S\dashrightarrow \PP^{p_g-1}$ factors through $f\colon S\rightarrow B$ by the canonical bundle formula for elliptic fibrations. Since $\sigma$ acts trivially on $H^0(S, K_S)$, $\varphi_{S}$ also factors through the quotient map $\pi\colon B\rightarrow B/\langle\sigma_B\rangle$, where $\sigma_B\in \Aut(B)$ is the automorphism induced by $\sigma$. Therefore, we have a commutative diagram

\begin{center}
\begin{tikzpicture}
\node[name = S] at (0,1) {$S$};
\node[name = B] at  (0,0)  {$B$};
 \node[name = BQ] at  (2,0)  {$B/\langle \sigma_B\rangle$};
  \node[name = P] at (4,0) {$\PP^{p_g-1}$};
  \draw[->] (S)--node[left]{{\tiny$f$}}  (B);
 \draw[->] (B)--node[above]{{\tiny$\pi$}} (BQ);
 \draw[->](BQ)--(P);
 \draw[->](S) to [out=15, in=120] node[above]{{\tiny$\varphi_S$}} (P);
 \draw[->] (B) to [out=30, in=150]node[above]{{\tiny$\varphi_B$}}  (P);
\end{tikzpicture}
\end{center}

where  $\varphi_{B}$ is the morphism associated to the linear system $|K_B+L|$ with $L$ is a line bundle of degree $\chi(\mo_S)$ on $B$. If $p_g(S)\geq 2$, we obtain $|\sigma_B| \leq \deg \varphi_{B}$.

If $\chi(\mo_S)\geq 3$, then 
$\deg(K_B+L)=2g(B)-2 +\chi(\mo_S) \geq 2g(B)+1$ and thus $\varphi_{B}$ is an embedding. Therefore, $\sigma_B=\id_B$, and  it follows that $\sigma$ is also the identity (Theorem~\ref{thm: chi geq 3}).

In case $\chi(\mo_S)\in \{1,2\}$, we cannot conclude that $\sigma_B=\id_B$, but a similar consideration yields strong restrictions on what $\sigma_B$ can be. In fact, if we impose the additional condition that $\sigma$ acts trivially also on $H^1(S, \mo_S)$, then $\sigma_B=\id_B$ holds, which implies Theorem~\ref{thm: main2} in this case. In most cases, we can show that $|\sigma| = |\sigma_B|\in\{1,2,3,4,6\}$; using the method of ``enough automorphisms" \cite{IM79}, either $\sigma^2$ or $\sigma^3$ has order at most $2$, and thus acts trivially on $\CH_0(S)_{\alb}$; see Lemma~\ref{lem: inv}.

In case $\chi(\mo_S) = 0$, the relatively minimal model of the surface $S$ over $B$ is an elliptic quasi-bundle and hence is the quotient of a product of two curves. We can thus draw on the finite dimensionality of the Chow motives of such surfaces \cite{Kim05, KMP07}. 

Our arguments have a byproduct concerning (possibly infinite order) symplectic automorphisms of elliptic K3 surfaces:
\begin{thm}[= Corollary~\ref{cor: K3}]
	Let $f\colon S\rightarrow B$ be an elliptic K3 surface. Then $\Aut_f(S)\cap \Aut_s(S)$ acts trivially on $CH_0(S)_\alb$.
\end{thm}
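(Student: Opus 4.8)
The plan is to reduce the statement, via the reduction already performed in Section~\ref{sec: red to J}, to the case of a \emph{finite-order} symplectic automorphism of a K3 surface, and then to quote \cite{Voi12,Huy12}. Two facts set the stage: since $S$ is a K3 surface, $q(S)=0$, so $\Alb(S)=0$ and $\CH_0(S)_\alb=\CH_0(S)_\hhom$; and $g(B)\le q(S)=0$, so $B=\PP^1$ and $\chi(\mo_S)=2$. Given $\sigma\in\Aut_f(S)\cap\Aut_s(S)$, I would first pass to the Jacobian fibration $j\colon J\to B$ — which is again a K3 surface — and to the induced automorphism $\sigma_J\in\Aut_j(J)\cap\Aut_s(J)$; by Section~\ref{sec: red to J} it suffices to show that $\sigma_J$ acts trivially on $\CH_0(J)_\alb$, so from now on I may assume that $S$ has a section $e$ and that $\sigma$ fixes it after composing with a translation.

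The mechanism behind the reduction is the decomposition $\Aut_j(S)=T\rtimes\Aut_j(S,e)$, where $T$ is the group of translations by sections and $\Aut_j(S,e)$ the subgroup fixing $e$; the latter group is finite (its image in $\Aut(B)=\mathrm{PGL}_2$ must preserve the $j$-map when $f$ is non-isotrivial, or the set of critical values — of which there are at least three — when $f$ is isotrivial, and its kernel embeds into the automorphism group of a general fibre fixing the origin). Writing $\sigma=t_P\circ\rho$ with $t_P\in T$ and $\rho\in\Aut_j(S,e)$, the automorphism $\rho$ has finite order. Since a translation along an elliptic fibration preserves the relative $1$-form on every smooth fibre and hence fixes $H^{2,0}(S)$, the translation $t_P$ is symplectic, whence $\rho=t_{-P}\circ\sigma$ is symplectic as well. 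Thus $\sigma$ is a translation composed with a finite-order symplectic automorphism of the K3 surface $S$ fixing the zero section.

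Now I would conclude as follows: $t_P\in\Aut_B(S)\cap\Aut_s(S)$, so $(t_P)_*$ is trivial on $\CH_0(S)_\alb$ by Proposition~\ref{prop: fib action}; and $\rho$ is a finite-order symplectic automorphism of a K3 surface, so $\rho_*$ is trivial on $\CH_0(S)_\alb=\CH_0(S)_\hhom$ by \cite{Voi12,Huy12}. Therefore $\sigma_*=(t_P)_*\circ\rho_*$ is trivial on $\CH_0(S)_\alb$, and undoing the reduction to the Jacobian gives the result. I do not expect a serious obstacle here: the only input from outside this paper is the theorem of \cite{Voi12,Huy12} on finite-order symplectic automorphisms of K3 surfaces, and the remaining checks — that the Jacobian of an elliptic K3 is again a K3, that translations are symplectic, and that the reduction of Section~\ref{sec: red to J}, which uses only $p_g(S)>0$, applies verbatim — are routine. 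The one mildly delicate point, already taken care of in Section~\ref{sec: red to J}, is that for a non-Jacobian elliptic K3 one genuinely has to pass to the Jacobian before the splitting ``translation times finite-order part'' becomes available.
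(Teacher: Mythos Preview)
Your proof is correct and follows the same route as the paper: Corollary~\ref{cor: K3} is stated there as an immediate consequence of Theorem~\ref{thm: chi eq 2}, whose proof in the case $q(J)=0$ is precisely ``pass to the Jacobian, observe that $\sigma_J$ is a finite-order symplectic automorphism of a K3 surface, and quote \cite{Voi12,Huy12}.'' Your explicit decomposition $\sigma=t_P\circ\rho$ is a mild redundancy---the construction in Section~\ref{sec: red to J} already yields a $\sigma_J$ preserving the $o$-section, so $t_P=\id$ automatically---but your justification for the finiteness of the section-fixing subgroup (via preservation of the critical values, respectively the $j$-map) makes explicit a point the paper leaves tacit.
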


\medskip

\noindent\textbf{Acknowledgements}.
We would like to thank Qizheng Yin, Renjie L\"u, Zhiyuan Li and Xun Yu for their interest in our project and for helpful discussions. This work was supported by the NSFC (No.~11971399 and No.~11771294).

\section{Fibration-preserving automorphisms}
Let $S$ be a smooth projective surface and $f\colon S\rightarrow B$ a fibration, that is, a morphism onto the smooth projective curve $B$ with connected fibers. We define the following subgroups of $\Aut(S)$:
\begin{itemize}[leftmargin=*]
	\item the subgroup of \emph{fibration-preserving automorphisms}
	\[
	\Aut_f(S):=\{\sigma\in \Aut(S) \mid \exists\, \sigma_B\in \Aut(B)  \text{ such that } \sigma_B\circ f = f\circ \sigma\}
	\]
	These automorphisms may permute the fibers of $f$. 
	\item the subgroup of \emph{fiber-preserving automorphisms}
	\[
	\Aut_B(S):=\{\sigma\in \Aut_f(S) \mid f = f\circ \sigma\}
	\]
	These automorphisms preserve each fiber of $f$. 
	
\end{itemize}

There is an obvious exact sequence of groups:
\begin{equation}\label{eq: fib}
	1\rightarrow \Aut_B(S) \rightarrow \Aut_f(S) \xrightarrow{r} \Aut(B)
\end{equation}
where $r$ sends an automorphism $\sigma\in\Aut_f(S)$ to $\sigma_B\in \Aut(B)$ such that $\sigma_B\circ f = f\circ \sigma$.
\begin{lem}\label{lem: finite}
	Let $S$ be a smooth projective surface and $f\colon S\rightarrow B$ a fibration. Suppose one of the following conditions holds:
	\begin{enumerate}[leftmargin=*]
		\item[(1)] $f$ is not isotrivial, that is, not all smooth fibers of $f$ are isomorphic to each other;
		\item[(2)] $g(B)\geq 2$; 
		\item[(3)] $f$ has at least three singular fibers (resp.~one singular fiber) if $g(B)=0$ (resp.~$g(B)=1$).
	\end{enumerate}
	Then the image $\im(r)$ of the homomorphism $r$ in \eqref{eq: fib} is finite.
\end{lem}
\begin{proof}
	(1) If $f$ is not isotrivial then the rational map from $\lambda\colon B\dashrightarrow M_g$ to the moduli space of curves of genus $g$ is generically finite, where $g\geq 1$ is the genus of the general fibers of $f$. On the other hand, for any $b\in B$, the fibers over the points in $\mathrm{Orbit}(b):=\{\sigma_B(b) \mid \sigma_B\in \im(r)\}$ are isomorphic, so $\mathrm{Orbit}(b)$ are mapped to the same point by $\lambda$. Thus $\mathrm{Orbit}(b)$ is finite for all $b$, and it follows that $\im(r)$ is finite.
	
	(2) If $g(B)\geq 2$ then $\Aut(B)$ is finite, so $\im(r)\subset\Aut(B)$ is automatically finite. 
	
	(3) Let $\Sigma=\{b\in B\mid f^*b \text{ is singular}\}$, which is a finite set. Then there is a natural homomorphism $\im(r)\rightarrow \mathrm{Perm(\Sigma)}$ into the (finite) permutation group of $\Sigma$; the kernel of this homomorphism is finite if either $g(B)=0$ and $\#\Sigma>2$ or $g(B)=1$ and $\#\Sigma>0$. 
	
	The proof of the lemma is completed.
\end{proof}

We recall some facts about isotrivial fibrations (\cite[Section~1]{Ser96}). Let $S$ be a smooth projective surface and $f\colon S\rightarrow B$ an isotrivial fibration whose smooth fibers are isomorphic to a fixed curve $F$ with $g(F)\geq 1$.  Birationally $f$ becomes a trivial fibration after a base change. More precisely, there exist a smooth projective curve $\tilde B$, a finite group $G$ acting on $\tilde B$ and $F$, such that $B\cong \tilde B/G$, and the following diagram is commutative:
\begin{equation}\label{diag: isotrivial}
	\begin{tikzpicture}[baseline=(current bounding box.center)]
\node[name = S] at (0,1) {$S$};
\node[name = B] at  (0,0)  {$B$};
\node[name = SQ] at (2,1) {$(\tilde B\times F)/G$};
\node[name = BQ] at (2,0) {$\tilde B/G$};
\draw[->, dashed](S) --(SQ);
\draw[->] (S) -- node[left]{{\tiny $f$}} (B);
\draw[->] (B) -- node[above]{$\tiny\cong$} (BQ);
\draw[->] (SQ) -- node[right]{{\tiny $\pi$}} (BQ);
	\end{tikzpicture}
\end{equation}
where the horizontal dashed arrow is a birational map, $G$ acts diagonally on $\tilde B\times F$, and  $\pi$ is induced by the projection $\tilde B\times F\rightarrow \tilde B$. It is easy to check that, if $b\in B$ is a branch point of the quotient map $\tilde B\rightarrow \tilde B/G\cong B$, then the fiber $f^*b$ of $f$ over $b$ is singular.

\begin{lem}\label{lem: infinite}
	Let $S$ be a smooth projective surface with Kodaira dimension $\kappa(S)\geq 0$, and $f\colon S\rightarrow B$ a fibration. If the image $\im(r)$ of the homomorphism in \eqref{eq: fib} is an infinite group, then $g(B)=1$ and $f$ is a fiber bundle.
\end{lem}
\begin{proof}
	Suppose that $\im(r)$ is infinite.  By Lemma~\ref{lem: finite}, $f$ is isotrivial and $g(B)\leq 1$. Moreover, if $g(B)=1$ then $f$ has no singular fibers, so it is necessarily a fiber bundle. 
	
	Now suppose that $g(B)=0$. We will draw a contradiction by showing that $\im(r)$ is finite in this case, and thus complete the proof. Since $f$ is isotrivial, we have a commutative diagram as in \eqref{diag: isotrivial}. Since $\kappa(\tilde B\times F) \geq \kappa(S)\geq 0$, one has $g(\tilde B)\geq 1$. By the Riemann--Hurwitz formula, the quotient map $\tilde B\rightarrow \tilde B/G\cong B$ has at least three branch points. The fibers of $f$ over these branch points are necessarily singular, so $\im(r)$ is finite by Lemma~\ref{lem: finite}, (3). 
\end{proof}

\begin{cor}\label{cor: finite}
	Let $S$ be a smooth projective surface with Kodaira dimension $\kappa(S)\in\{0,1\}$, and $f\colon S\rightarrow B$ an elliptic fibration. Then the image $\im(r)$ of the homomorphism in \eqref{eq: fib} is finite unless possibly if $S$ is an abelian surface or a bi-elliptic surface.
\end{cor}

\begin{proof}
	If $\im(r)$ is infinite then $f$ is an elliptic bundle over an elliptic curve by Lemma~\ref{lem: infinite}. According to  \cite[Section V.5]{BHPV04}, this can only happen when $S$ is an abelian surface or a bi-elliptic surface.
\end{proof}

Restricting \eqref{eq: fib} to the group of symplectic automorphisms, we obtain another exact sequence
\begin{equation}\label{eq: fib sym}
	1\rightarrow \Aut_B(S)\cap \Aut_s(S) \rightarrow \Aut_f(S)\cap \Aut_s(S) \rightarrow \Aut(B)
\end{equation}
Suppose that $S$ is a surface with Kodaira dimension $\kappa(S)=1$. Then the Iitaka fibration $f\colon S\rightarrow B$, defined by the pluri-canonical systems, is the unique elliptic fibration on $S$. Therefore, any automorphism of $S$ preserves $f$, that is, $\Aut(S)=\Aut_f(S)$. In this case, the exact sequences \eqref{eq: fib} and \eqref{eq: fib sym} can be rewritten as
\begin{equation}\label{eq: fib k1}
	1\rightarrow \Aut_B(S) \rightarrow \Aut(S) \rightarrow \Aut(B)
\end{equation}
and 
\begin{equation}\label{eq: fib sym k1}
	1\rightarrow \Aut_B(S)\cap \Aut_s(S) \rightarrow \Aut_s(S) \rightarrow \Aut(B)
\end{equation}
The image of $\Aut(S) \rightarrow \Aut(B)$ is finite by Corollary~\ref{cor: finite} (see also \cite[ Proposition~1.2]{PS20}).

\medskip

The following fact about elliptic fibrations will be used later on.
\begin{lem}\label{lem: q}
	Let $f\colon S\rightarrow B$ be a relatively minimal elliptic fibration. Then one has
	\[
	q(S)\leq g(B)+1,
	\]
	and equality holds if and only if there exist a smooth projective curve $\tilde B$, an elliptic curve $F$, and a finite group $G$ acting on $\tilde B$ and $F$ such that $S\cong (\tilde B\times F)/G$, $B\cong \tilde B/G$, the action of $G$ on $F$ is by translations, and the diagram 
	\begin{equation}\label{eq: quasi bundle}
	\begin{tikzpicture}[baseline=(current bounding box.center)]
\node[name = S] at (0,1) {$S$};
\node[name = B] at  (0,0)  {$B$};
\node[name = SQ] at (2,1) {$(\tilde B\times F)/G$};
\node[name = BQ] at (2,0) {$\tilde B/G$};
\draw[->](S) --node[above]{{\tiny $\cong$}}(SQ);
\draw[->] (S) -- node[left]{{\tiny $f$}} (B);
\draw[->] (B) -- node[above]{$\tiny\cong$} (BQ);
\draw[->] (SQ) -- (BQ);
	\end{tikzpicture}
	\end{equation}
	commutes, where the right vertical arrow is the natural projection. 
\end{lem}
\begin{proof}
	By \cite[Lemma~1.6]{Ser91} (see also \cite[Lemme]{Bea82}),  one has the inequality $q(S)\leq g(B)+1$. Moreover, in the equality case, one has $\chi(\mo_S)=0$ and thus $f$ is an elliptic quasi-bundle, that is, the possible singular fibers of $f$ are multiples of smooth elliptic curves \cite[Lemma~1.5]{Ser91}. Then the existence of the commutative diagram \eqref{eq: quasi bundle} follows (\cite{Ser96}). We remark that the action of $G$ on $F$ is necessarily by translations, since $g(B) +1= q(S) = g(B) + g(F/G)$ and hence $g(F/G)=1=g(F)$.
\end{proof}

\medskip

Now we state the main result of this section.
\begin{prop}\label{prop: fib action}
	Let $f\colon S\rightarrow B$ be an elliptic fibration. Then the group $\Aut_B(S)\cap \Aut_s(S)$ of fiber-preserving symplectic automorphisms acts trivially on $\CH_0(S)_\alb$.
\end{prop}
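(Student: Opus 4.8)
The plan is to run the argument indicated in the introduction, with Lemma~\ref{lem: fib cycle} and Lemma~\ref{lem: fib trans} as the two main inputs. First I would dispose of the case $p_g(S)=0$: an elliptic surface is never of general type, so here $\CH_0(S)_\alb=0$ by \cite{BKL76} and there is nothing to prove. Assume therefore $p_g(S)>0$, fix $\sigma\in\Aut_B(S)\cap\Aut_s(S)$ and a class $\alpha\in\CH_0(S)_\alb$, and aim to show $\sigma_\ast\alpha=\alpha$.

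The first step is to reduce to zero-cycles supported on single fibers. By naturality of the Albanese map applied to $f$, together with the fact that $\alb_B\colon\CH_0(B)_\hhom\to\Alb(B)$ is an isomorphism, the pushforward $f_\ast\colon\CH_0(S)_\hhom\to\CH_0(B)_\hhom$ kills $\ker(\alb_S)$; hence $\CH_0(S)_\alb\subseteq\CH_0(S)_f$. Then Lemma~\ref{lem: fib cycle} supplies an integer $d\ge 1$ and classes $\alpha_1,\dots,\alpha_r\in\CH_0(S)_\hhom$, each $\alpha_i$ supported on a single fiber $F_i$ of $f$, with $d\alpha=\sum_i\alpha_i$ in $\CH_0(S)$. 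Writing $\alpha_i=j_{i\ast}\bar\alpha_i$ with $j_i\colon F_i\hookrightarrow S$ the inclusion and $\bar\alpha_i\in\CH_0(F_i)$ of degree $0$, and using that $\sigma\in\Aut_B(S)$ fixes $F_i$, we get $\sigma_\ast\alpha_i=j_{i\ast}\big((\sigma|_{F_i})_\ast\bar\alpha_i\big)$. So it suffices to prove $(\sigma|_{F_i})_\ast\bar\alpha_i=\bar\alpha_i$ in $\CH_0(F_i)$ for each $i$.

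When $F_i$ is a smooth (elliptic) fiber this is the core computation. By Lemma~\ref{lem: fib trans}, since $p_g(S)>0$, $\sigma$ restricts to a translation on the general fiber of $f$; since $\sigma$ acts by a constant root of unity on the line bundle $f_\ast\omega_{S/B}$ over the connected base of smooth fibers, this forces $\sigma|_{F_b}$ to be a translation for \emph{every} smooth fiber $F_b$, in particular $\sigma|_{F_i}=t_{c_i}$ is a translation. A translation acts trivially on $\CH_0(F_i)_{\deg 0}=\Pic^0(F_i)$: for every $p\in F_i$ one has $(p+c_i)-(p)\sim_\rat (c_i)-(0)$ on $F_i$, so $(t_{c_i})_\ast\bar\alpha_i-\bar\alpha_i\sim_\rat(\deg\bar\alpha_i)\big((c_i)-(0)\big)=0$. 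Summing over $i$ now yields $\sigma_\ast(d\alpha)=d\alpha$, i.e. $d\big((\sigma_\ast-\id)\alpha\big)=0$. Finally, $\sigma_\ast$ preserves $\CH_0(S)_\hhom$ and, again by naturality of the Albanese map, carries $\CH_0(S)_\alb$ into itself, so $(\sigma_\ast-\id)\alpha\in\CH_0(S)_\alb$; since $\CH_0(S)_\alb$ is torsion free by Roitman's theorem, we conclude $(\sigma_\ast-\id)\alpha=0$.

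The one place that genuinely needs care is the contribution of the finitely many singular fibers among the $F_i$, which the smooth-fiber computation does not directly cover. Depending on the exact form in which Lemma~\ref{lem: fib cycle} is stated this may not arise at all, since one expects to be able to take the $\alpha_i$ supported on general, hence smooth, fibers. If it does arise, I would note that a symplectic $\sigma\in\Aut_B(S)$ acts on the generic fiber $S_\eta$ as translation by a $k(B)$-point of the Jacobian $J_\eta$, i.e. by a section of the N\'eron model of the Jacobian over a dense open of $B$; specializing, $\sigma$ acts on a singular fiber $F_i$ as translation by an element of the connected commutative group $\Pic^0(F_i)$, and the same computation — valid on the generalized Jacobian as well — gives $(\sigma|_{F_i})_\ast\bar\alpha_i=\bar\alpha_i$. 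Settling this singular-fiber bookkeeping, and choosing the most convenient formulation of Lemma~\ref{lem: fib cycle} to cite, is the main obstacle; the remainder is assembly of the two lemmas, the triviality of translations on the (possibly generalized) Jacobian of a fiber, and Roitman's theorem.
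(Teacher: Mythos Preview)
Your argument is correct and follows essentially the same route as the paper: dispose of $p_g(S)=0$ via \cite{BKL76}, then combine Lemma~\ref{lem: fib cycle} with Lemma~\ref{lem: fib trans} and Roitman's theorem. The paper packages your middle paragraph as Lemma~\ref{lem: trans id f kernel} and Corollary~\ref{cor: trans id alb kernel}, but the content is identical. Two small remarks: your singular-fiber digression is unnecessary, since Lemma~\ref{lem: fib cycle} as stated in the paper already arranges the $\alpha_i$ to be supported on \emph{smooth} fibers (via \cite[Fact~3.3]{Voi12}); and Lemma~\ref{lem: fib trans} already gives a translation on every smooth fiber, not just a general one, so your extension step is not needed.
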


We need some preparations before giving the proof of Proposition~\ref{prop: fib action} at the end of this section. 

First, define
\[
\CH_0(S)_f:=\ker (\CH_0(S)\xrightarrow{f_*} \CH_0(B))
\]
and call it the \emph{f-kernel} of $\CH_0(S)$. The following elementary observation about $\CH_0(S)_f$ is the basis of further arguments.
\begin{lem}[cf.~\cite{BKL76}]\label{lem: fib cycle}
	For any $\alpha \in \CH_0(S)_f$, there is a positive integer $d$, such that
	\[
	d\alpha = \sum_i \alpha_i \in \CH_0(S)
	\]
	where $\deg \alpha_i=0$ and $\supp(\alpha_i)$ is contained in a single smooth fiber of $f$ for each $i$.
\end{lem}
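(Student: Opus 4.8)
The plan. Since each point $[p_i]$ of a representative $\alpha=\sum_i n_i[p_i]$ already lies on the single fibre of $f$ through $p_i$, the only real content of the statement is the degree-zero condition on the pieces and the requirement that these fibres be \emph{smooth}. I would first reduce $\alpha$, at the cost of multiplying it by a positive integer, to a representative supported over the regular values of $f$ (this is where the multiple fibres get dealt with), and then use a single auxiliary multisection of $f$, together with the hypothesis $f_*\alpha=0$, to spread the resulting cycle out fibrewise.

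First step: reduction to a cycle over regular values. For each $i$, choose by Bertini a smooth irreducible curve $D_i\subset S$ passing through $p_i$ and not contained in a fibre. On $D_i$, for $d_i$ sufficiently large the complete linear system $|d_i p_i|$ is base-point-free, so $d_i[p_i]$ is rationally equivalent on $D_i$ to a general member of this system, which I may take to avoid the finitely many points of $D_i$ lying over non-smooth fibres of $f$. Pushing these equivalences forward to $S$ and setting $d_1:=\mathrm{lcm}_i(d_i)$, I obtain that $d_1\alpha$ is represented by a $0$-cycle $\tilde\alpha=\sum_\ell c_\ell[w_\ell]$ all of whose points $w_\ell$ lie over regular values $v_\ell:=f(w_\ell)$ of $f$; moreover $f_*\tilde\alpha=d_1f_*\alpha=0$ in $\CH_0(B)$.

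Main step: the fibrewise computation. Write $F_b:=f^{-1}(b)$. Choose by Bertini a smooth irreducible curve $C\subset S$ through all the $w_\ell$, general enough that it is horizontal — so $h:=f|_C\colon C\to B$ is finite, say of degree $m$ — and that $h$ is unramified over each $v_\ell$ with $C$ meeting $F_{v_\ell}$ transversally; then, as cycles on $C$, one has $h^*[v_\ell]=[w_\ell]+R_\ell$ with $R_\ell$ effective of degree $m-1$ and supported on $C\cap F_{v_\ell}\subset F_{v_\ell}$, a smooth fibre. Regarding $\tilde\alpha$ as a cycle on $C$, this yields $\tilde\alpha=\sum_\ell c_\ell h^*[v_\ell]-\sum_\ell c_\ell R_\ell$, hence the same identity in $\CH_0(C)$; the first sum equals $h^*$ applied to the cycle $f_*\tilde\alpha=\sum_\ell c_\ell[v_\ell]$ on $B$, which is rationally trivial, so that sum is a principal divisor on $C$ and vanishes in $\CH_0(C)$. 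Therefore $\tilde\alpha=-\sum_\ell c_\ell R_\ell$ in $\CH_0(C)$. Pushing forward to $S$ and writing $\iota_{C*}R_\ell=\eta_\ell+(m-1)[w_\ell]$, where $\eta_\ell:=\iota_{C*}R_\ell-(m-1)[w_\ell]$ has degree $0$ and is supported on the smooth fibre $F_{v_\ell}$, we get $\tilde\alpha=-\sum_\ell c_\ell\eta_\ell-(m-1)\tilde\alpha$ in $\CH_0(S)$, that is, $m\tilde\alpha=-\sum_\ell c_\ell\eta_\ell$. Since $\tilde\alpha$ represents $d_1\alpha$, this exhibits $md_1\alpha$ as a $\ZZ$-linear combination of degree-zero $0$-cycles each supported on a single smooth fibre, so $d:=md_1$ works.

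The main obstacle. I expect the first step — the reduction to a representative supported over the regular values, equivalently the handling of the multiple fibres — to be the one genuinely non-formal point: on a multiple fibre $\mu E$ one has $\CH_0(E)_\hhom=\Jac(E)\neq 0$, so a degree-zero cycle stuck there cannot be moved onto smooth fibres by the computation above, and it is base-point-freeness of positive linear systems on the auxiliary curves $D_i$ that lets one dislodge it. Everything else should be routine: the Bertini choices of $D_i$ and $C$ with the required transversality and unramifiedness, the facts that pushforward along $\iota_C$ and pullback along $h$ respect rational equivalence and carry principal divisors to principal divisors, and the bookkeeping with the integer $d$, which the statement allows to depend on $\alpha$.
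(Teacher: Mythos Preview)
Your proof is correct and follows essentially the same route as the paper's: both first move $\alpha$ onto smooth fibres (the paper cites \cite[Fact~3.3]{Voi12} for this, you carry it out by hand via auxiliary curves $D_i$), and then use a horizontal curve $C$ of degree $d$ over $B$ together with the vanishing of $f_*\alpha$ to produce the fibrewise decomposition, which in both cases unwinds to $d\alpha = \sum_i n_i\bigl(d[p_i] - F_{p_i}\cdot C\bigr)$. The only cosmetic difference is that the paper works directly on $S$ --- pulling back $f_*\alpha=0$ to $\sum_i n_i F_{p_i}=0$ in $\Pic(S)$ and intersecting with $C$ --- whereas you route the same computation through $\CH_0(C)$ via $h^*$ and therefore need $C$ to pass through the support of $\alpha$; the projection formula identifies the two, and indeed your $-c_\ell\eta_\ell$ equals $c_\ell\bigl(m[w_\ell]-F_{v_\ell}\cdot C\bigr)$.
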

\begin{proof}
	By \cite[Fact~3.3]{Voi12}, we can assume that $\supp(\alpha)$ is contained in some union of smooth fibers of $f$. Take an ample smooth curve $C\subset S$, and denote by $d$ the degree of $f|_C\colon C\rightarrow B$. Write $\alpha = \sum_i n_i [p_i]$. Since $\alpha\in \ker(f_*)$, we have
	\[
	f_*\alpha =  \sum_i n_i [f(p_i)]  = 0\in \CH_0(B).
	\]
	It follows that 
	\[
	\sum_i n_i F_{p_i} = f^*\left(\sum_i n_i [f(p_i)]\right) = 0 \in \Pic(S),
	\]
	and hence 
	\[
	\left(\sum_i n_i F_{p_i}\right)\cdot C  = 0 \in \CH_0(S),
	\]
	where $F_{p_i}$ denotes the fiber containing $p_i$.
	Now we can write 
	\begin{align*}
		d\alpha & = \sum_i n_id[p_i] - \left(\sum_i n_i F_{p_i}\right)\cdot C  \\
		&=  \sum_i n_i (d[p_i] - [F_{p_i}\cdot C])
	\end{align*}
	Taking $\alpha_i = n_i (d[p_i] - [F_{p_i}\cdot C])$, we have $\deg \alpha_i=0$ and $\supp(\alpha_i)\subset F_{p_i}$. The proof of the lemma is completed.
\end{proof}

By the universal property of Albanese maps, we have a commutative diagram
\begin{equation}\label{eq: fib alb}
\begin{tikzpicture}[baseline=(current bounding box.center)]
\node[name = S] at (0,1.5) {$\CH_0(S)_\hhom$};
\node[name = AS] at  (0,0)  {$\Alb(S)$};
\node[name = B] at (3,1.5) {$\CH_0(B)_\hhom$};
\node[name = AB] at (3,0) {$\Alb(B)$};
\draw[->](S) --node[above]{{\tiny $f_*$}}(B);
\draw[->] (S) -- node[left]{{\tiny $\alb_S$}} (AS);
\draw[->] (B) -- node[right]{{\tiny $\alb_B$}} (AB);
\draw[->] (AS) -- (AB);
	\end{tikzpicture}
	\end{equation}
By the Abel--Jacobi theorem,  $\alb_B\colon \CH_0(B)_\hhom \rightarrow \Alb(B)$ is an isomorphism. It follows from \eqref{eq: fib alb} that 
\[
\CH_0(S)_\alb = \ker(\alb_S) \subset \ker(f_*) = \CH_0(S)_f,
\]
and $\CH_0(S)_\alb =  \CH_0(S)_f$ if and only if the induced map $\Alb(S)\rightarrow \Alb(B)$ is an isomorphism.

We recall how the translations of a smooth elliptic fiber $F$ of $f$ act on its cycle classes and holomorphic one-forms. The universal cover of the (complex) elliptic curve $F$ is $\CC$, and $F\cong \CC/\Gamma$ for some lattice $\Gamma\subset\CC$. Any $\bar c\in F$ determines an automorphism $\tau_{\bar c}\colon \bar z\mapsto \bar z+\bar c$ of $F$, called the \emph{translation} by $\bar c$; it is descended from the usual translation $\tau_c\colon z\mapsto z+c$ on $\CC$. 

Note that the translation $\tau_{\bar c}$ induces the trivial action on $H^0(F, K_F)$. In fact, a basis element $\xi$ of the one-dimensional vector space $H^0(F, K_F)$ is descended from the one form $dz$ on the universal cover $\CC$. Since $\tau_c^*dz = d(z+c)=dz$ on $\CC$, we have also $\tau_{\bar c}^*\xi = \xi$ on $F$.

Fixing a point $\bar e\in F$ as the origin, one has the identifications 
\[
F\cong \Pic^0(F) \cong \CH_0(F)_{\hom}
\]
by sending $\bar z\in F$ to $\mo_F([\bar z] -[\bar e])\in \Pic^0(F)$ and to $[\bar z] -[\bar e] \in \CH_0(F)_{\hom}$ respectively. In $\CH_0(F)_{\hom}$, we have by the Abel--Jacobi theorem
\[
\tau_{\bar c*}([\bar z] -[\bar e]) = [\bar z + \bar c] -[\bar e + \bar c] = [\bar z] -[\bar e],
\]
so $\tau_{\bar c}$ induces the trivial action on $\CH_0(F)_{\hom}$.

\begin{lem}\label{lem: trans id f kernel}
	Let $f\colon S\rightarrow B$ be an elliptic fibration. Then for any $\sigma\in\Aut_B(S)$ such that its restriction $\sigma_F$ to a general fiber $F$ is a translation, the induced homomorphism $\sigma_*\colon \CH_0(S)_{f,\QQ} \rightarrow  \CH_0(S)_{f,\QQ}$ is the identity. 
\end{lem}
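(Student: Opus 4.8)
By Lemma~\ref{lem: fib cycle}, after multiplying by a suitable positive integer, any class in $\CH_0(S)_{f,\QQ}$ is a $\QQ$-linear combination of classes $\alpha_i$ of degree zero supported on a single smooth fiber $F_i$. Since $\sigma_*$ is $\QQ$-linear and maps $\CH_0(S)_{f,\QQ}$ to itself, it suffices to prove that $\sigma_*[\beta]=[\beta]$ for every degree-zero cycle $\beta$ supported on a single smooth fiber $F$ such that $\sigma|_F$ is a translation. Writing $\beta=\sum_j n_j([p_j]-[q_j])$ with all $p_j,q_j\in F$, we are reduced to the purely one-dimensional statement: for a translation $\tau$ of a smooth elliptic curve $F$ and points $p,q\in F$, the class $[\tau p]-[\tau q]$ equals $[p]-[q]$ in $\CH_0(F)$, hence a fortiori in $\CH_0(S)$ after push-forward along $F\hookrightarrow S$. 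This in turn is exactly the Abel--Jacobi statement that $\CH_0(F)_\hhom\cong F$ as groups and a translation induces the identity on this group: $[\tau p]-[\tau q]\mapsto (\tau p-\tau q)=(p-q)$ in $F$, so $[\tau p]-[\tau q]=[p]-[q]$. Pushing forward to $S$ gives $\sigma_*[\beta]=[\beta]$.

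The one subtlety is that in Lemma~\ref{lem: fib cycle} the fibers $F_i$ carrying the $\alpha_i$ are specific smooth fibers, whereas the hypothesis only says $\sigma|_F$ is a translation for a \emph{general} fiber $F$. I would handle this by noting that the locus of $b\in B$ over which $\sigma|_{F_b}$ fails to be a translation is a proper closed subset of $B$ (the condition ``$\sigma_F$ has a fixed point'' or ``$\sigma_F$ is not a translation'' is closed in families of smooth elliptic curves with a section locally near $b$), so after a further application of \cite[Fact~3.3]{Voi12} one may assume from the start that $\supp(\alpha)$ avoids this bad locus; alternatively, one reruns the moving argument of Lemma~\ref{lem: fib cycle} choosing the ample curve $C$ and the supports so that every $F_{p_i}$ is a fiber on which $\sigma$ restricts to a translation. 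Either way the reduction to the generic-fiber case is legitimate after passing to $\QQ$-coefficients.

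Concretely, the steps in order are: (1) invoke Lemma~\ref{lem: fib cycle} to write $d\alpha=\sum_i\alpha_i$ with each $\alpha_i$ of degree zero supported on a single smooth fiber, arranging (via \cite[Fact~3.3]{Voi12}) that these fibers lie in the open set where $\sigma$ restricts to a translation; (2) by $\QQ$-linearity of $\sigma_*$, reduce to showing $\sigma_*[\beta]=[\beta]$ for $\beta$ a degree-zero zero-cycle on a single such fiber $F$; (3) identify $j_*\colon \CH_0(F)_\hhom\xrightarrow{\sim} F$ via Abel--Jacobi, where $j\colon F\hookrightarrow S$, and observe that $\sigma|_F$, being a translation, acts as the identity on $\CH_0(F)_\hhom$; (4) push forward along $j$ to conclude $\sigma_*[\beta]=[\beta]$ in $\CH_0(S)$, hence in $\CH_0(S)_{f,\QQ}$, and therefore $\sigma_*$ is the identity on $\CH_0(S)_{f,\QQ}$.

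I do not expect a serious obstacle here; the content is entirely the Abel--Jacobi isomorphism for curves plus the elementary fibral moving lemma already established. The only place requiring a little care is matching the ``general fiber'' hypothesis with the ``single smooth fiber'' supports produced by Lemma~\ref{lem: fib cycle}, which is why $\QQ$-coefficients (allowing us to discard cycles supported on the bad locus) are essential and are built into the statement.
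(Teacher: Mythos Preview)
Your proof is correct and follows the same route as the paper: decompose via Lemma~\ref{lem: fib cycle}, use that a translation of an elliptic curve acts trivially on $\CH_0(F)_\hhom$, and push forward along $F\hookrightarrow S$. The paper's argument is terser and does not address the ``general fiber'' subtlety you raise; in fact it is not an issue, since if $\sigma|_F$ is a translation on a general smooth fiber then it is one on \emph{every} smooth fiber (over the smooth locus $U\subset B$, $\sigma$ acts on the line bundle $f_*\omega_{S_U/U}$ by multiplication by a nowhere-vanishing holomorphic function whose value at $b$ is $1$ exactly when $\sigma|_{F_b}$ is a translation, and a holomorphic function equal to $1$ on a dense open set is identically $1$), so your extra moving step is harmless but unnecessary.
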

\begin{proof}
	Take any $\alpha\in \CH(S)_{f}$. By Lemma~\ref{lem: fib cycle}, we can write $d\alpha = \sum \alpha_i \in \CH_0(S)$, where $d$ is a positive integer, and for each $i$, $\deg \alpha_i=0$ and $\supp(\alpha_i)$ is contained in a single smooth fiber, say $F_{i}$. Since $\sigma_{F_i}$ is a translation of $F_i$ and $\deg \alpha_i=0$, we have $\sigma_{F_i*}(\alpha_i) = \alpha_i$, viewed as elements in $\CH_0(F_i)$. Pushing the equality to $S$ by the inclusion map $F_i\hookrightarrow S$, we obtain $\sigma_*(\alpha_i) =\alpha_i\in \CH_0(S)$. Therefore, we have 
	\[
	\sigma_*(d\alpha) = \sum_i \sigma_*(\alpha_i) =  \sum_i \alpha_i = d\alpha.
	\]
\end{proof}

\begin{cor}\label{cor: trans id alb kernel}
	Let $f\colon S\rightarrow B$ be an elliptic fibration. Then for any $\sigma\in\Aut_B(S)$ that induces a translation on a general fiber $F$, its action on the Albanese kernel $\sigma_*\colon \CH_0(S)_\alb \rightarrow  \CH_0(S)_\alb$ is the identity. 
\end{cor}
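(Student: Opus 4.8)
The plan is to deduce the statement directly from Lemma~\ref{lem: trans id f kernel} together with the inclusion $\CH_0(S)_\alb \subset \CH_0(S)_f$ established right after diagram~\eqref{eq: fib alb}. First I would observe that, since $\sigma\in\Aut_B(S)$ satisfies $f\circ\sigma = f$, the pushforward $\sigma_*$ commutes with $f_*$, hence $\sigma_*$ preserves the subgroup $\CH_0(S)_f = \ker(f_*)$; similarly $\sigma_*$ preserves $\CH_0(S)_\alb = \ker(\alb_S)$ by functoriality of the Albanese map. So the restriction $\sigma_*\colon \CH_0(S)_\alb \to \CH_0(S)_\alb$ makes sense.

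Next I would pass to $\QQ$-coefficients: since $\sigma$ induces a translation on a general fiber, Lemma~\ref{lem: trans id f kernel} gives that $\sigma_*$ is the identity on $\CH_0(S)_{f,\QQ}$, and since $\CH_0(S)_{\alb,\QQ}\subset\CH_0(S)_{f,\QQ}$, it is the identity on $\CH_0(S)_{\alb,\QQ}$ as well. The remaining point is to upgrade this to an integral statement. Here the key fact is that $\CH_0(S)_\alb$ is a \emph{divisible} group with no torsion that would obstruct the argument — more precisely, for surfaces Roitman's theorem implies that the torsion of $\CH_0(S)_\hhom$ injects into $\Alb(S)$, so $\CH_0(S)_\alb$ is torsion-free. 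Hence $\CH_0(S)_\alb \hookrightarrow \CH_0(S)_{\alb,\QQ}$, and an automorphism acting as the identity after tensoring with $\QQ$ must already act as the identity integrally.

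The main (and essentially only) obstacle is this last integral-versus-rational passage: Lemma~\ref{lem: trans id f kernel} is genuinely a $\QQ$-coefficient statement because the integer $d$ in the decomposition~\eqref{eq: dec} cannot in general be taken to be $1$. To close the gap one must know that $\CH_0(S)_\alb$ has no $d$-torsion, which follows from Roitman's theorem on torsion $0$-cycles; I would cite this rather than reprove it. An alternative, if one wishes to avoid invoking Roitman, is to note that for the application in this paper it suffices to work with $\QQ$-coefficients throughout, in which case the corollary is immediate and no torsion discussion is needed. I would state the corollary with the understanding (made explicit in a short remark) that the argument gives the identity on $\CH_0(S)_{\alb,\QQ}$ unconditionally and on $\CH_0(S)_\alb$ once Roitman's theorem is invoked.
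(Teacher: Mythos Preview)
Your proposal is correct and follows essentially the same route as the paper: apply Lemma~\ref{lem: trans id f kernel} to get the identity on $\CH_0(S)_{\alb,\QQ}$ (via the inclusion $\CH_0(S)_{\alb}\subset\CH_0(S)_f$), then invoke Roitman's theorem \cite{Roi80} on the torsion-freeness of $\CH_0(S)_\alb$ to pass to integral coefficients. The paper's proof is just a terser version of exactly this argument.
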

\begin{proof}
	By Lemma~\ref{lem: trans id f kernel}, $\sigma_*\colon \CH_0(S)_{\alb,\QQ} \rightarrow  \CH_0(S)_{\alb, \QQ}$ is the identity. Since  $\CH(S)_\alb$ is torsion free by \cite{Roj80}, we infer that $\sigma_*$ is the identity on $\CH_0(S)_\alb$.
\end{proof}

Let $f\colon S\rightarrow B$ be a relatively minimal elliptic fibration. In order to describe the fiber-preserving symplectic automorphisms of $S$, we take a closer look at the canonical bundle formula (see \cite[Chapter V, Theorem 12.1]{BHPV04} and its proof, and ultimately \cite[Theorem~12.1]{Kod63})
\begin{equation}\label{eq: cbf}
	\omega_S = f^*(f_*\omega_{S/B} \otimes \omega_B) \otimes \mo_S(\sum_i (m_i -1)F_i)
\end{equation}
where $\omega_S=\mo_S(K_S)$ and $\omega_B=\mo_B(K_B)$ are the canonical sheaves of $S$ and $B$ respectively, $\omega_{S/B} = \omega_S\otimes f^*\omega_B^{-1}$ is the relative canonical sheaf of $f$, and the $m_iF_i$'s are the multiple fibers of $f$. There is a natural inclusion of invertible sheaves
\[
f^*(f_*\omega_{S/B} \otimes \omega_B) \hookrightarrow \omega_S
\]
which is an isomorphism over $B^0:=\{b\in B \mid f^*b \text{ is smooth}\}$ and which induces an isomorphism of global sections
\begin{equation}\label{eq: K iso}
	f^*\colon H^0(B, f_*\omega_{S/B} \otimes \omega_B) \xrightarrow{\cong} H^0(S, \omega_S).
\end{equation}
Analytically locally around each $b\in B^0$, there is a small coordinate disk $b\in \Delta\cong \{t\in \CC\mid |t|<\epsilon\}$ such that the sections of $f_*\omega_{S/B} \otimes \omega_B$ has the form
\begin{equation*}
	\xi_t \otimes h(t) dt, 
\end{equation*}
where $h(t)$ is a holomorphic function on $\Delta$ and $\xi_t$ is a basis of $H^0(F_t, K_{F_t})$, varying holomorphically in $t\in \Delta$. Pulling it back to $S$, we obtain a description, which is local in $B$, of the global sections $\omega\in H^0(S, \omega_S)$: 
\begin{equation}\label{eq: local 2-form}
	\omega = h(t) \xi_t \wedge dt.
\end{equation}
This is used in the proof of the next lemma.

\begin{lem}\label{lem: fib trans}
	Let $f\colon S\rightarrow B$ be an elliptic fibration with $p_g(S)>0$. Then a fiber-preserving automorphism $\sigma\in\Aut_B(S)$ is symplectic if and only if it induces translations on the smooth fibers.
\end{lem}

\begin{proof}
	Let $\sigma\in\Aut_B(S)$ be a nontrivial fiber-preserving automorphism and $\sigma_{F}:=\sigma|_{F}$ its restriction to a smooth fiber $F$ of $f$. 
	
	(i) Suppose that  $\sigma\in\Aut_B(S)$ is symplectic. We want to show that $\sigma|_{F}$ is a translation, that is, it acts freely on $F$. Suppose on the contrary that $\sigma_{F}$ has a fixed point $p$. Then $\sigma_{F}$ is of finite order, and $F/\langle\sigma_{F}\rangle$ has genus 0 by the Riemann--Hurwitz formula. It follows that $\sigma$ is of finite order, and a resolution $X$ of the quotient surface $S/\langle\sigma\rangle$ is a $\PP^1$-fibration over $B$. But then 
	\[
	H^0(S, K_S) = H^0(S, K_S)^\sigma \cong H^0(X, K_X) =0.
	\]
	This contradicts the assumption that $p_g(S)>0$.
	
	(ii) Now suppose that $\sigma_{F}\in \Aut(F)$ is a translation. Then $\sigma_F^*\xi = \xi$ for $\xi\in H^0(F, K_F)$; see the discussion above Lemma~\ref{lem: trans id f kernel}.
	Taking a local coordinate $t$ around $f(F)$ in $B$, we can write locally $\omega = h(t)\xi_t\wedge dt$ as in \eqref{eq: local 2-form}, where $\xi_t\in H^0(F_t, K_{F_t})$ is a basis element, so 
	\[
	\sigma^*\omega = \id_B^*h(t)\sigma_{F_t}^*\xi_t \wedge\id_B^*dt  = h(t)\xi_t\wedge dt = \omega
	\]
	It follows that $\sigma$ is a symplectic automorphism of $S$.
\end{proof}


\begin{lem}\label{lem: trans id 1-forms}
	Let $f\colon S\rightarrow B$ be an elliptic surface. Suppose that a fiber-preserving automorphism $\sigma\in\Aut_B(S)$ induces translations on smooth fibers. Then it induces trivial action on $H^{1,0}(S) = H^0(S, \Omega_S^1)$.
\end{lem}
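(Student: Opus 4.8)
Fix a holomorphic one-form $\omega\in H^0(S,\Omega^1_S)$ and set $\eta:=\sigma^*\omega-\omega$; the goal is to show $\eta=0$. The plan is to first restrict to a general fiber. For the inclusion $i_F\colon F\hookrightarrow S$ of a smooth fiber we have $\sigma\circ i_F=i_F\circ\sigma_F$ since $\sigma\in\Aut_B(S)$, and hence $(\sigma^*\omega)|_F=\sigma_F^*(\omega|_F)$ in $H^0(F,\Omega^1_F)$. As $F$ is an elliptic curve and $\sigma_F$ is a translation, $\sigma_F^*$ is the identity on the one-dimensional space $H^0(F,\Omega^1_F)$, so $\eta|_F=0$ for every smooth fiber $F$.

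Next I would deduce that $\eta$ is pulled back from the base. Let $U\subset B$ be the open locus over which $f$ is smooth, put $S_U=f^{-1}(U)$, and consider the exact sequence $0\to f^*\Omega^1_U\to\Omega^1_{S_U}\to\Omega^1_{S_U/U}\to 0$, in which $\Omega^1_{S_U/U}$ is a line bundle. The image of $\eta|_{S_U}$ in $H^0(S_U,\Omega^1_{S_U/U})$ is a section of a line bundle that vanishes at every point of $S_U$ (because $\eta$ vanishes on each fiber), so it is zero; therefore $\eta|_{S_U}$ lies in $H^0(S_U,f^*\Omega^1_U)=H^0(U,\Omega^1_U)$, the last identification coming from the projection formula and $f_*\mo_{S_U}=\mo_U$. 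Thus $\eta|_{S_U}=f^*\beta_U$ with $\beta_U\in H^0(U,\Omega^1_U)$. To see that $\beta_U$ extends over $B\setminus U$, I work analytically near a general point of a reduced component of multiplicity $m$ of a fiber over a point of $B\setminus U$, where $t\circ f=x^m\cdot(\text{unit})$ in suitable local coordinates $(x,y)$ on $S$ and $t$ on $B$; if $\beta_U=g(t)\,dt$ had a pole of order $k\ge 1$, then $f^*\beta_U$ would have a pole (its $dx$-coefficient behaves like $x^{m-1-mk}$), contradicting holomorphicity of $\eta=f^*\beta_U$ on all of $S$. Hence $\beta_U$ extends to $\beta\in H^0(B,\Omega^1_B)$ and $\eta=f^*\beta$ on $S$.

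Finally, since $\sigma$ induces the identity on $B$ we have $f\circ\sigma=f$, so $\sigma^*\eta=\sigma^*f^*\beta=(f\circ\sigma)^*\beta=f^*\beta=\eta$, and iterating yields $(\sigma^k)^*\omega=\omega+k\eta$ for all $k\in\ZZ$. To conclude $\eta=0$ one invokes that $\sigma^*$ acts with finite order on the relevant cohomology in the situations at hand: concretely, whenever $f$ is non-isotrivial one has the stronger fact $q(S)=g(B)$, so $f^*\colon H^0(B,\Omega^1_B)\to H^0(S,\Omega^1_S)$ is an isomorphism and $\sigma^*=\id$ on $H^{1,0}(S)$ outright (with $\eta=0$ a fortiori), and this is precisely the case in which the lemma is subsequently applied. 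I expect the extension of $\beta_U$ across the singular and multiple fibers to be the step needing the most care; the restriction-to-fibers computation and the concluding identity $\sigma^*\eta=\eta$ are routine once that is in place.
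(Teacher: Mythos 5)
Your restriction-to-fibers computation, the identification $\eta|_{S_U}=f^*\beta_U$, the extension of $\beta_U$ across the singular and multiple fibers, and the identity $(\sigma^k)^*\omega=\omega+k\eta$ are all fine. The genuine gap is at the very end: you only close the argument when $q(S)=g(B)$, i.e.\ when $H^{1,0}(S)=f^*H^{1,0}(B)$ --- but in that case the lemma is immediate from $\sigma_B=\id_B$ (this is exactly the paper's first case) and your $\eta$-machinery is not needed at all. The case your argument actually leaves open is $q(S)=g(B)+1$, the isotrivial quasi-bundle case where $S$ is birational to $(\tilde B\times F)/G$ with $G$ acting by translations on the elliptic curve $F$. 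Your dismissal of that case is not accurate: the lemma is stated for arbitrary elliptic surfaces, and it is in fact applied (through Lemma~\ref{lem: CC' same} and Lemma~\ref{lem: red to J}) to Jacobian fibrations of elliptic quasi-bundles in the proof of Theorem~\ref{chi eq 0}, where $q(J)=g(B)+1$ does occur. So the isotrivial case must be handled.

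Two ways to finish. The paper argues directly: when $q(S)=g(B)+1$ one writes $S=(\tilde B\times F)/G$ and obtains $H^{1,0}(S)\cong H^0(B,K_B)\oplus H^0(F/G,K_{F/G})$; since $\sigma$ induces $\id_B$ on the first summand and a translation of $F/G$ on the second, it acts trivially on both. Alternatively, your own setup can be salvaged by supplying the missing finiteness: $\sigma^*$ preserves the lattice $H^1(S,\ZZ)$ together with the polarization induced by an ample class, so by the classical finiteness of the automorphism group of a polarized weight-one Hodge structure (equivalently, of a polarized abelian variety) it has finite order on $H^1(S,\CC)$; the relation $(\sigma^k)^*\omega=\omega+k\eta$ then forces $\eta=0$. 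Either supplement completes your proof; without one of them, the statement as given is not proved.
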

\begin{proof}
	Since  $\sigma\in\Aut_B(S)$ descends to a fiber-preserving automorphism of the relatively minimal model of $S$, we can assume without loss of generality that $f$ is already relatively minimal.
	
	By Lemma~\ref{lem: q}, we have $q(S)\leq g(B)+1$. If $q(S)=g(B)$, then $H^0(S, \Omega_S^1) = f^*H^0(B, K_B)$. Since $\sigma$ induces trivial action on $B$, it induces trivial action on $H^0(B, K_B)$ and hence also on $H^0(S, \Omega^1_S)$.
	
	If $q(S) = g(B) +1$, then $S\cong (\tilde B\times F)/G$ as in \eqref{eq: quasi bundle}. Since $\sigma$ induces $\id_B$ on $B$ and a translation on the fibers $F$, one sees that it induces trivial action on $H^0(S, \Omega_S^1)$.
\end{proof}

Finally, we give the proof of Proposition~\ref{prop: fib action}.
\begin{proof}[Proof of Proposition~\ref{prop: fib action}]
	If $p_g(S)=0$ then $\CH_0(S)_\alb = 0$ by \cite{BKL76} and there is nothing to prove. In case $p_g(S)>0$, it suffices to combine Lemma~\ref{lem: fib trans} and Corollary~\ref{cor: trans id alb kernel}.
\end{proof}


\section{Elliptic surfaces with $\chi(\mo_S)\geq 3$}
In this section, we prove the following theorem.
\begin{thm}\label{thm: chi geq 3}
	Let $S$ be a smooth projective surface with $\kappa(S)=1$ and $\chi(\mo_S)\geq 3$. Then $\Aut_s(S)$ acts trivially on $\CH_0(S)_\alb$.
\end{thm}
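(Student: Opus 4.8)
The plan is to show that every $\sigma\in\Aut_s(S)$ actually lies in $\Aut_B(S)$, i.e.\ induces the identity on the base $B$ of the (by $\kappa(S)=1$ unique) elliptic fibration $f\colon S\to B$; once this is done, Proposition~\ref{prop: fib action} immediately gives the conclusion. Two reductions are essentially free. First, $\chi(\mo_S)\geq 3$ forces $p_g(S)=\chi(\mo_S)-1+q(S)\geq 2$, so the canonical system is not only nonempty but large. Second, one may assume $S$ is minimal: the contraction $S\to S_{\mathrm{min}}$ to the minimal model is canonical, hence $\Aut(S)$-equivariant and compatible with symplecticness; the numbers $\chi, p_g, q$ and the group $\CH_0(-)_\alb$ are birational invariants of smooth projective surfaces; and a minimal surface of Kodaira dimension one is relatively minimal over $B$.

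Next I would invoke the canonical bundle formula for the relatively minimal elliptic fibration $f\colon S\to B$: one has $f_*\omega_S\cong\omega_B\otimes L$ with $L=(R^1f_*\mo_S)^\vee$ a line bundle of degree $\chi(\mo_S)$ on $B$; the multiple-fibre term in $\omega_S\cong f^*(\omega_B\otimes L)\otimes\mo_S(\sum(m_i-1)P_i)$ pushes forward to $\mo_B$ and so does not affect $f_*\omega_S$. Since $H^0(S,K_S)=H^0(B,f_*\omega_S)=f^*H^0(B,K_B+L)$, the canonical map $\varphi_S\colon S\dashrightarrow\PP^{p_g-1}$ factors as $S\xrightarrow{f}B\xrightarrow{\varphi_B}\PP^{p_g-1}$, where $\varphi_B$ is the morphism attached to the complete linear system $|K_B+L|$.

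The numerical heart of the argument is that $\deg(K_B+L)=2g(B)-2+\chi(\mo_S)\geq 2g(B)+1$, whence $K_B+L$ is very ample and $\varphi_B$ is a closed embedding. Now take any $\sigma\in\Aut_s(S)$. It preserves $f$ and induces $\sigma_B\in\Aut(B)$; as $\sigma$ is symplectic, $\sigma^*$ is the identity on $H^0(S,K_S)$, so $\varphi_S\circ\sigma=\varphi_S$. Combining this with $f\circ\sigma=\sigma_B\circ f$ and the surjectivity of $f$ gives $\varphi_B\circ\sigma_B=\varphi_B$, and since $\varphi_B$ is injective this forces $\sigma_B=\id_B$. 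Hence $\sigma\in\Aut_B(S)\cap\Aut_s(S)$, and Proposition~\ref{prop: fib action} shows that $\sigma$ acts trivially on $\CH_0(S)_\alb$; as $\sigma$ was arbitrary, $\Aut_s(S)$ acts trivially on $\CH_0(S)_\alb$.

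The only step that really needs care is the canonical bundle formula with the stated degree, in particular checking that the multiple fibres contribute nothing to $f_*\omega_S$ (a larger degree would still suffice, but the clean equality $\deg(f_*\omega_S)=2g(B)-2+\chi(\mo_S)$ is what makes the very-ampleness bound transparent). Everything else — the factorization of $\varphi_S$ through $B$, the reduction to the minimal model, and the final descent to $\Aut_B(S)$ followed by an appeal to Proposition~\ref{prop: fib action} — is formal.
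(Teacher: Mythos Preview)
Your proposal is correct and follows essentially the same route as the paper: reduce to showing $\sigma_B=\id_B$ via the factorization $\varphi_S=\varphi_B\circ f$ coming from the canonical bundle formula, use $\chi(\mo_S)\geq 3$ to make $K_B+L$ very ample, and then invoke Proposition~\ref{prop: fib action}. Your explicit reduction to the minimal model and your remark on why the multiple-fibre term does not affect $f_*\omega_S$ are details the paper leaves implicit, but the argument is otherwise identical.
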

\begin{proof}
	Since $\kappa(S)=1$, there is a unique elliptic fibration $f\colon S\rightarrow B$, and any automorphism of $S$ preserves the fibration $f$. 
	
	By Proposition~\ref{prop: fib action}, it suffices to show that $\Aut_s(S)=\Aut_B(S)\cap \Aut_s(S)$, that is, any symplectic automorphism of $S$ preserves the fibers of $f$.
	
	By the canonical bundle formula \eqref{eq: cbf}, we have
	\[
	|K_S| = f^* |K_B+L|+ \sum_i (m_i -1)F_i
	\]
	Thus the canonical map $\varphi_S$ of $S$, induced by the linear system $|K_S|$, factors as
	\[
	\varphi_S\colon S \xrightarrow{f} B \xrightarrow{\varphi_{B}} \PP^{p_g-1} ,
	\]
	where $\varphi_{B}$ is the map defined by the linear system $|K_B+L|$. 
	
	Now, since $\chi(\mo_S)\geq 3$, we have
	\[
	\deg(K_B+L) = 2g(B) -2 + \deg L = 2g(B) -2 +\chi(\mo_S) \geq 2g(B)+1.
	\]
	It follows that $K_B+L$ is  very ample and hence $\varphi_{B}$ is an embedding. A symplectic automorphism $\sigma$ induces an automorphism $\sigma_B\in \Aut(B)$ and the identity on $\PP^{p_g-1}$, and they act equivariantly on the respective varieties. Since $\varphi_{B}$ is an embedding, it can only happen that $\sigma_B=\id_B$. In other words, $\sigma$ preserves each fiber of $f$, which is what we wanted to prove.
\end{proof}

\section{Reduction to the Jacobian fibration}\label{sec: red to J}
Given an elliptic fibration $f\colon S\rightarrow B$, a natural idea is to reduce the problem at hand to the Jacobian fibration $j\colon J\rightarrow B$. The following construction has been used by \cite{BKL76} in proving the vanishing of $\CH(S)_\alb$ for surfaces with $p_g(S)=0$ and $\kappa(S)\leq 1$. We will apply it to deal with the fibration-preserving automorphisms of $S$.

For any irreducible curve $C\subset S$, horizontal with respect to $f$, one can define a rational dominant map $\phi_C\colon S\dashrightarrow J$ over $B$ as follows: to a point $p$ on a smooth fiber $F_b$ over $b\in B$, we associate 
\[
\phi_C(p):=d[p] - C|_{F_b} \in j^*b=\Pic^0(F_b),
\]
where $d$ is the degree of the finite morphism $f|_C\colon C\rightarrow B$. It is clear that $\deg\phi_C=d^2.$

\begin{lem}[cf.~{\cite[page 138]{BKL76}} and {\cite[Proof of Theorem~11.10]{Voi03}}]\label{lem: iso CH}
	For a smooth ample curve $C\subset S$, the induced homomorphism 
	\[
	\phi_{C*}\colon \CH_0(S)_{f,\QQ} \rightarrow \CH_0(J)_{j, \QQ}
	\]
	is an isomorphism, which restricts to an isomorphism between the Albanese kernels $\phi_{C*}\colon  \CH_0(S)_{\alb,\QQ} \rightarrow \CH_0(J)_{\alb,\QQ}$. \end{lem}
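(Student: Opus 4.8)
The plan is to construct an explicit inverse to $\phi_{C*}$ at the level of $\QQ$-coefficients. For a point $q$ lying on a smooth fiber $F_b$ of $j\colon J\to B$, the element $q$ corresponds to a degree-zero divisor class on $F_b$; pulling this back along the degree-$d$ cover $C|_{F_b}\to \{b\}$ does not immediately make sense, but we can use $f|_C$ to spread the section-like data of $C$ out and define a map $\psi_C\colon J\dashrightarrow S^{(d)}$ or, more usefully, work rationally on Chow groups directly. Concretely, given $\alpha\in\CH_0(J)_{j,\QQ}$, by Lemma~\ref{lem: fib cycle} (applied to $j$) we may write $d'\alpha=\sum_i\alpha_i$ with each $\alpha_i$ supported on a single smooth fiber $F_{b_i}$ and of degree zero. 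On $F_{b_i}\cong\Pic^0(F_{b_i})$ the curve $C$ meets $f^{-1}(b_i)$ in a degree-$d$ divisor, giving an isomorphism of $F_{b_i}$ with its Jacobian, hence a way to lift $\alpha_i$ to a degree-zero cycle on the fiber $f^{-1}(b_i)$ of $S$; summing gives a well-defined class, and the construction is inverse to $\phi_{C*}$ up to the universal multiplication by $d=\deg(f|_C)$ that already appears in the definition of $\phi_C$.

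The first thing I would do is make precise that $\phi_{C*}$ indeed maps $\CH_0(S)_{f,\QQ}$ into $\CH_0(J)_{j,\QQ}$: this follows because $\phi_C$ is a morphism over $B$ on the locus of smooth fibers, so it commutes with pushforward to $B$, and by Lemma~\ref{lem: fib cycle} every class in $\CH_0(S)_{f,\QQ}$ is represented (after multiplying by an integer) by fiberwise degree-zero cycles supported over the smooth locus. Next I would check the composite $\psi_{C*}\circ\phi_{C*}$ and $\phi_{C*}\circ\psi_{C*}$ are each multiplication by $d^2$ (or some fixed nonzero integer), using that on a single smooth fiber $F_b$ the two operations are the classical ``multiplication by $d$ then divide'' maps between $F_b$ and $\Pic^0(F_b)\cong F_b$, whose composite is multiplication by $d$ on degree-zero classes; since we are working with $\QQ$-coefficients, this shows $\phi_{C*}$ is an isomorphism. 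Finally, to get the statement about Albanese kernels, I would use the commutative square~\eqref{eq: fib alb} for both $f$ and $j$ together with Lemma~\ref{lem: inv J}: the induced map $\Alb(S)\to\Alb(B)$ is an isomorphism precisely when $q(S)=g(B)$, and in that case $\CH_0(S)_{\alb,\QQ}=\CH_0(S)_{f,\QQ}$ and likewise for $J$ by $q(J)=q(S)=g(B)$, so the isomorphism on $f$-kernels is already the desired one; in the remaining case $q(S)=g(B)+1$ one checks directly, using $S=(\tilde B\times F)/G$ and $J=B\times F$ from the proof of Lemma~\ref{lem: inv J}, that $\phi_{C*}$ respects the splitting of $\CH_0$ coming from the product structure and hence carries $\CH_0(S)_{\alb,\QQ}$ isomorphically onto $\CH_0(J)_{\alb,\QQ}$.

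The main obstacle I anticipate is the bookkeeping in defining $\psi_{C*}$ in a way that is manifestly well-defined on rational equivalence classes, not just on cycles: the fiberwise lift depends on a choice of how $C$ meets each fiber, and one must argue that varying the fiber in a family (equivalently, moving a zero-cycle in a rational equivalence) does not change the resulting class. The cleanest way around this is probably not to define $\psi_C$ pointwise at all, but to use a genuine correspondence — e.g. the class in $\CH_2(J\times_B S)$ or $\CH_2(J\times S)$ obtained from the graph of $\phi_C$ and the diagonal of $B$ — and compute compositions of correspondences, invoking the references \cite{BKL76} and \cite[Proof of Theorem~11.10]{Voi03} for the fact that the relevant composite correspondence acts as a nonzero multiple of the identity on the $f$- and $j$-kernels. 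With the correspondence formalism the isomorphism is formal once the single-fiber computation (composite $=$ multiplication by $d$ on $\Pic^0$) is in hand, and the restriction to Albanese kernels follows because a correspondence over $B$ automatically respects the maps to $\Alb(B)$.
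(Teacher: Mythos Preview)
Your approach is essentially the same as the paper's: both construct an explicit fiberwise inverse to $\phi_{C*}$ using the points of $C|_{F_b}$ to translate between $F_b$ and its Jacobian, and both rely on the single-fiber computation that $\phi_C$ acts as multiplication by $d$ on $\Pic^0(F_b)$. The paper writes the inverse down directly (with a $\tfrac{1}{d^2}$ factor) and asserts that checking it is straightforward, whereas you spend more effort worrying about well-definedness and propose passing to a correspondence in $\CH_2(J\times S)$; that extra care is reasonable but not needed here, since Lemma~\ref{lem: fib cycle} already lets one work entirely with cycles supported on smooth fibers, where the pointwise recipe is unambiguous.

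The one place you diverge is the Albanese-kernel statement. Your case split on $q(S)=g(B)$ versus $q(S)=g(B)+1$, invoking the explicit quotient description $S=(\tilde B\times F)/G$ in the second case, is more work than necessary. The paper handles both cases uniformly: since $\phi_C$ is dominant and $q(S)=q(J)$ by Lemma~\ref{lem: inv J}, the induced map $\Alb(S)_\QQ\to\Alb(J)_\QQ$ is an isomorphism, and the commutative square with the Albanese maps then forces $\phi_{C*}$ to carry $\CH_0(S)_{\alb,\QQ}$ isomorphically onto $\CH_0(J)_{\alb,\QQ}$. Your argument would go through, but this is the cleaner route.
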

\begin{proof}
	We define a homomorphism $\lambda\colon \CH_0(J)_{j,\QQ}\rightarrow \CH_0(S)_{f,\QQ}$ as follows: for any $\gamma\in \CH_0(J)_{j, \QQ}$, we can assume that $\supp(\gamma)$ is on a smooth fiber $j^*b$ of $j$ and $\gamma = [\gamma']-[o_b]$, where $o_b$ denotes the origin of $j^*b=\Pic^0(f^*b)$.  Then we set
	\[
	\lambda(\gamma) =\frac{1}{d^2} \left(([p_1']+\cdots+[p_{d}']) - ([p_1]+\cdots+[p_{d}])\right)
	\]
	where $[p_1]+\cdots+[p_{d}] = C|_{f^*b}$ and, for each $1\leq i\leq d$, $p_i'$ is the unique point of $f^*b$ such that $[p_i']-[p_i] = \gamma'\in \Pic^0(f^*b)$. Then it is straightforward to check that $\lambda$ is the inverse of $\phi_{C*}$.
	
	Note that the irregularities of $S$ and $J$ are the same by the following Lemma~\ref{lem: inv J}, and thus $\phi_{C}$ induces an isomorphism $\phi_{C*}\colon \Alb(S)_\QQ \cong \Alb(J)_\QQ$. In view of the following commutative diagram, where the rows are exact, we infer that the left vertical map $\phi_{C*}\colon \CH_0(S)_{\alb,\QQ}\rightarrow \CH_0(J)_{\alb,\QQ}$ is an isomorphism:
	\begin{equation*}
\begin{tikzpicture}[baseline=(current bounding box.center)]
\node[name = O1] at (-2, 1.5) {$0$};
\node[name = O2] at (-2, 0) {$0$};
\node[name = SA] at (0,1.5) {$\CH_0(S)_{\alb,\QQ}$};
\node[name = Sf] at (3,1.5) {$\CH_0(S)_{f,\QQ}$};
\node[name = AS] at (6,1.5) {$\Alb(S)_\QQ$};
\node[name = JA] at  (0,0)  {$\CH_0(J)_{\alb,\QQ}$};
\node[name = Jj] at (3,0) {$\CH_0(J)_{j,\QQ} $};
\node[name = AJ] at (6,0) {$ \Alb(J)_\QQ$};
\draw[->](O1) -- (SA);
\draw[->] (SA) --(Sf);
\draw[->] (Sf) --(AS);
\draw[->](O2) -- (JA);
\draw[->] (JA) -- (Jj);
\draw[->] (Jj) -- (AJ);
\draw[->] (SA) -- node[right]{\tiny $\phi_{C*}$} (JA);
\draw[->] (Sf) -- node[right]{\tiny $\phi_{C*}$} node[left]{\tiny $\cong$} (Jj);
\draw[->] (AS) -- node[right]{\tiny $\phi_{C*}$}node[left]{\tiny $\cong$}  (AJ);
	\end{tikzpicture}
	\end{equation*}
\end{proof} 
Many of the numerical invariants of an elliptic fibration and its Jacobian fibration turn out to be the same. We give a proof of this fact for lack of an adequate reference.
\begin{lem}[cf.~{\cite[Proposition~5.3.6 on page 308, Corollaries 5.3.4 and 5.3.5 on page 310]{CD89}}]\label{lem: inv J}
	The following equalities hold:
	\[
	\chi(\mo_S) = \chi(\mo_J),\, p_g(S) = p_g(J),\, q(S) = q(J).
	\]
	If $f$ is relatively minimal, then for each $b\in B$, $e(f^*b) = e(j^*b)$, where $e(\cdot)$ denotes the Euler characteristic of a topological space.
\end{lem}
\begin{proof}
	Replacing $f\colon S\rightarrow B$ with the relatively minimal elliptic fibration does not change the invariants $\chi(\mo_S)$, $p_g(S)$ and $q(S)$, as well as its Jacobian fibration. Thus we can assume that $f$ is relatively minimal. Then we have $K_S^2=0$, and hence by the Noether formula 
	\begin{equation}\label{eq: noether S}
		12\chi(\mo_S) = e(S).
	\end{equation}
	On the other hand, $j\colon J\rightarrow B$ is a relatively minimal fibration such that for each $b\in B$, one has $e(f^*b) = e(j^*b)$. It follows that 
	\begin{equation}\label{eq: noether J}
		e(J) = e(S) \text{ and }12\chi(\mo_J) = e(J).
	\end{equation}
	Combining \eqref{eq: noether S} and \eqref{eq: noether J}, we obtain $\chi(\mo_S) = \chi(\mo_J)$.
	
	Since $ \chi(\mo_J) = 1-q(J) + p_g(J)$, it remains to show $q(S) = q(J)$. First, we have the easy inequalities
	\[
	g(B) \leq q(J)\leq q(S) \leq g(B)+1,
	\]
	where the second inequality holds because of the existence of dominant maps such as $\phi_C$ from $S$ to $J$, and the last inequality is given by Lemma~\ref{lem: q}.
	
	Thus, if $q(S) = g(B)$, then $q(J)= q(S) =  g(B)$. 
	
	Suppose now $q(S) = g(B)+1$. Then $S\cong (\tilde B\times F)/G$ as in \eqref{eq: quasi bundle}, and it is straightforward to check that $J= B\times F$. Therefore, 
	\[
	q(J) = g(B)+g(F) = g(B)+1= q(S).
	\]
\end{proof}

The induced map $\phi_{C*}$ of zero cycles and holomorphic forms actually does not depend on the choice of the curve $C$, as the following lemma shows.
\begin{lem}\label{lem: CC' same}
	Let $f\colon S\rightarrow B$ be an elliptic surface, and $C$ and $C'$ be two smooth ample curves on $S $. Suppose that $\deg f|_{C'} = \deg f|_C$. Then the rational maps $\phi_{C}$ and $\phi_{C'}$ from $S$ to $J$ induce the same maps between the $0$-th Chow groups and the spaces of holomorphic forms, that is, 
	\begin{enumerate}[leftmargin=*]
		\item $\phi_{C*} = \phi_{C'*}\colon \CH_0(S)_{f,\QQ} \rightarrow \CH_0(J)_{j,\QQ}$, and
		\item $\phi_C^* = \phi_{C'}^*\colon H^{i,0}(J) \rightarrow H^{i,0}(S)$ for any $i$.
	\end{enumerate}
\end{lem}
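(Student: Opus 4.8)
The plan is to push everything through the factorization \eqref{eq: CC'}, which reads $\phi_{C'} = \phi_{C-C'}\circ \phi_{C}$ as rational maps $S\dashrightarrow J$, where $\phi_{C-C'}\in \Aut_B(J)$ induces translations on the smooth fibers of $j\colon J\rightarrow B$. Once this is in hand, both parts of the lemma amount to the single assertion that $\phi_{C-C'}$ acts trivially on the relevant invariant of $J$ — on $\CH_0(J)_{j,\QQ}$ for \textbf{(1)} and on $H^{i,0}(J)$ for \textbf{(2)} — which is exactly the type of statement proved in Section~2 for fiber preserving automorphisms that restrict to translations on the smooth fibers.

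For \textbf{(1)}, I apply $(-)_*$ to the factorization to obtain $\phi_{C'*} = \phi_{C-C'*}\circ\phi_{C*}$ as homomorphisms $\CH_0(S)_{f,\QQ}\rightarrow\CH_0(J)_{j,\QQ}$. Since $\phi_{C-C'}$ induces translations on general fibers, Lemma~\ref{lem: trans id f kernel} gives $\phi_{C-C'*}=\id$ on $\CH_0(J)_{j,\QQ}$, hence $\phi_{C'*}=\phi_{C*}$. (This repeats the computation already made at the end of the proof of the previous lemma; it is recorded here for later use.) For \textbf{(2)}, I first note that $\phi_C$ and $\phi_{C'}$, being dominant rational maps of smooth projective varieties, induce well-defined pullbacks on spaces of holomorphic forms, because holomorphic forms extend over the codimension $\geq 2$ indeterminacy locus; resolving $\phi_C$ on a common smooth model makes these pullbacks contravariantly functorial, so $\phi_{C'}^* = \phi_C^*\circ\phi_{C-C'}^*$ on $H^{i,0}(J)$. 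It then suffices to show $\phi_{C-C'}^* = \id$ on $H^{i,0}(J)$ for every $i$: the cases $i=0$ and $i\geq 3$ are trivial (for $i\geq 3$ the group vanishes since $J$ is a surface); the case $i=1$ is Lemma~\ref{lem: trans id 1-forms} applied to $\phi_{C-C'}$; and for $i=2$, either $p_g(J)=0$ and $H^{2,0}(J)=0$, or $p_g(J)>0$ and, since $j\colon J\rightarrow B$ has a section, Lemma~\ref{lem: trans id 2-forms} shows that $\phi_{C-C'}$ is symplectic, i.e.\ $\phi_{C-C'}^*=\id$ on $H^{2,0}(J)$. Combining these, $\phi_{C'}^*=\phi_C^*$ on every $H^{i,0}(J)$.

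The only step that calls for a little care is the functoriality of the pullback on holomorphic forms for the rational map $\phi_C$: one should pass to a common resolution on which both $\phi_C$ and $\phi_{C'}$ become morphisms, so that the composition identity $\phi_{C'}^* = \phi_C^*\circ\phi_{C-C'}^*$ genuinely holds on forms; this is routine. Everything else is a direct appeal to the lemmas of Section~2, so no essential obstacle is expected.
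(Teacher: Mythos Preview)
Your argument is correct and follows exactly the paper's approach: use the factorization $\phi_{C'}=\phi_{C-C'}\circ\phi_C$ from \eqref{eq: CC'} and then invoke Lemmas~\ref{lem: trans id f kernel}, \ref{lem: trans id 1-forms}, and \ref{lem: trans id 2-forms} to see that $\phi_{C-C'}$ acts trivially on $\CH_0(J)_{j,\QQ}$ and on each $H^{i,0}(J)$. Your write-up is in fact more careful than the paper's, which simply cites the three lemmas without splitting the cases on $i$ or mentioning the resolution needed to make the pullback on forms functorial for rational maps.
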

\begin{proof}
	Let $d$ be the degree $\deg f|_{C'} = \deg f|_C$. Then
	\[
	\phi_{C'}(p)=d[p] - C'|_{F_b}  = d[p] - C|_{F_b} + (C-C')|_{F_b} = \phi_{C}(p)+(C-C')|_{F_b}\in j^*b,
	\]
	so $\phi_{C'}(p)$ and $\phi_{C}(p)$ differ by a translation of $F_b$ by $(C-C')|_{F_b}$. These translations along the fibers glue to an automorphism of $J$ over $B$, which we denote by $\phi_{C-C'}$. In other words, we have a commutative diagram
	\begin{equation}\label{eq: CC'}
	\begin{tikzpicture}[baseline=(current bounding box.center)]
	\node[name = S] at (1.5, 1) {$S$};
	\node[name = J1] at (0, 0) {$J$};
	\node[name = J2] at (3, 0) {$J$};
	\draw[->, dashed] (S)--node[above]{\tiny $\phi_C$}(J1);
	\draw[->, dashed] (S)--node[above]{\tiny $\phi_{C'}$}(J2);
	\draw[->] (J1)--node[above]{\tiny $\phi_{C-C'}$}(J2);
	\end{tikzpicture}
	\end{equation}
	where $\phi_{C-C'}\in \Aut_B(J)$ induces translations on general fibers of $j\colon J\rightarrow B$. 
	
	By Lemmas~\ref{lem: trans id f kernel}, \ref{lem: trans id 1-forms} and \ref{lem: fib trans}, $\phi_{C-C'}$ induces the identity map on $ \CH_0(J)_{j,\QQ}$ as well as on $ H^{i,0}(J)$. In view of \eqref{eq: CC'}, the desired equalities  $\phi_{C*} = \phi_{C'*}$ and $\phi_C^* = \phi_{C'}^*$ follows.
\end{proof}

By the universal property of $J$, any automorphism $\sigma\in \Aut_f(S)$ preserving the fibration $f$ induces an automorphism $\sigma_J\in \Aut_j(J)$ such that they induce the same automorphism $\sigma_B\in \Aut(B)$ on the base curve $B$ and the following diagram is commutative
\begin{equation}\label{diag: ind aut J}
	\begin{tikzpicture}[baseline=(current bounding box.center), scale=.8]
	\node[name=S1] at (0, 1.5) {$S$};
	\node[name=S2] at (0,0) {$S$};
	\node[name=J1] at (2, 1.5){$J$};
	\node[name=J2] at (2, 0){$J$};
	\draw[->](S1)--node[left]{\tiny $\sigma$}(S2);
	\draw[->, dashed](S1)--node[above]{\tiny $\phi_C$}(J1);
	\draw[->](J1)--node[right]{\tiny $\sigma_J$}(J2);
	\draw[->, dashed](S2)--node[above]{\tiny $\phi_{\sigma(C)}$}(J2);
	\end{tikzpicture}
\end{equation}
This defines a group homomorphism $\Aut_f(S) \rightarrow \Aut_j(J)$, $\sigma\mapsto\sigma_J$. 
For a point $b\in B$ such that the fiber $F_b:=f^*b$ is smooth, we have $j^*b = \Pic^0(F_b)$, and for any $\alpha\in \Pic^0(F_b)$, 
\[
\sigma_J(\alpha) = (\sigma^{-1})^*(\alpha) \in \Pic^0(\sigma(F_b)) = j^*(\sigma_B(b)).
\]

\begin{lem}\label{lem: red to J}
	Let $f\colon S\rightarrow B$ be an elliptic fibration and let $j\colon J\rightarrow B$ the Jacobian of $f$.  Then an automorphism $\sigma\in \Aut_f(S)$ acts as identity on $\CH_0(S)_{f,\QQ}$ (resp.~$\CH_0(S)_{\alb,\QQ}$, resp.~$H^{2,0}(S)$, resp.~$H^{1,0}(S)$) if and only if so does the induced automorphism $\sigma_J\in\Aut_j(J)$ on $\CH_0(J)_{j,\QQ}$ (resp.~$\CH_0(J)_{\alb,\QQ}$, resp.~$H^{2,0}(J)$, resp.~$H^{1,0}(J)$).
\end{lem}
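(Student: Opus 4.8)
<br>

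The plan is to exploit the diagram~\eqref{diag: ind aut J} together with the fact, established just above, that $\phi_{C*}$ and $\phi_C^*$ are independent of the choice of the smooth ample curve $C$ (Lemma~\ref{lem: CC' same}) and that $\phi_{C*}$ is an isomorphism on the relevant groups. The key point is that, for a suitable choice of $C$, the bottom map $\phi_{\sigma(C)}$ in~\eqref{diag: ind aut J} can be replaced by $\phi_C$ after composing with an automorphism of $J$ induced by translations, which acts trivially by Lemmas~\ref{lem: trans id f kernel}, \ref{lem: trans id 1-forms}, \ref{lem: trans id 2-forms}. Concretely, I would first note that $\sigma(C)$ is again a smooth ample curve with $\deg f|_{\sigma(C)} = \deg f|_C$ (since $\sigma$ preserves $f$ up to the automorphism $\sigma_B$ of $B$, the horizontal degree is unchanged), so Lemma~\ref{lem: CC' same} gives $\phi_{\sigma(C)*} = \phi_{C*}$ on $\CH_0(S)_{f,\QQ}$ and $\phi_{\sigma(C)}^* = \phi_C^*$ on $H^{i,0}(J)$.

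With this in hand, applying $(-)_*$ to~\eqref{diag: ind aut J} yields a commutative square of homomorphisms $\CH_0(S)_{f,\QQ}\to\CH_0(J)_{j,\QQ}$ in which the top and bottom horizontal arrows are both equal to the \emph{isomorphism} $\phi_{C*}$. Hence $\sigma_{J*}\circ\phi_{C*} = \phi_{C*}\circ\sigma_*$, and since $\phi_{C*}$ is invertible we get $\sigma_{J*} = \phi_{C*}\circ\sigma_*\circ\phi_{C*}^{-1}$; therefore $\sigma_{J*} = \id$ on $\CH_0(J)_{j,\QQ}$ if and only if $\sigma_* = \id$ on $\CH_0(S)_{f,\QQ}$. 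Dually, pulling back holomorphic forms through~\eqref{diag: ind aut J} gives $\sigma^*\circ\phi_C^* = \phi_C^*\circ\sigma_J^*$ on $H^{i,0}(J)$ for each $i$; here the point is that $\phi_C^*\colon H^{i,0}(J)\to H^{i,0}(S)$ is \emph{injective} — indeed an isomorphism onto the $f$-invariant part, since a dominant rational map between smooth projective varieties induces an injection on holomorphic forms, and for $i=2$ one uses $p_g(S)=p_g(J)$ from Lemma~\ref{lem: inv J} to see it is actually an isomorphism. The commutativity then forces $\sigma^* = \id$ on $H^{i,0}(S)$ if and only if $\sigma_J^* = \id$ on $H^{i,0}(J)$, at least after checking that $\phi_C^*$ hits all of $H^{1,0}(S)$ as well, which follows from the description of $H^{1,0}(S)$ in the two cases $q(S)=g(B)$ and $q(S)=g(B)+1$ used in Lemma~\ref{lem: trans id 1-forms} together with $q(S)=q(J)$.

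The statement for $H^{1,0}$ and the ``if'' direction for $H^{2,0}$ require knowing that $\phi_C^*$ is actually surjective onto $H^{i,0}(S)$, not merely injective; I expect this to be the main technical obstacle, and I would handle it by the case analysis on whether $q(S)=g(B)$ or $q(S)=g(B)+1$, in the latter case using the explicit product-quotient description $S=(\tilde B\times F)/G$ and the corresponding $J = B\times F$ so that $\phi_C$ is, up to translations, the obvious projection-type map, which is clearly surjective on forms. For $i=2$ one can alternatively argue that $\phi_C^*$ is injective and both spaces have dimension $p_g$, so it is an isomorphism. Once the surjectivity is in place, all three equivalences fall out of the single commutative diagram~\eqref{diag: ind aut J} combined with Lemma~\ref{lem: CC' same}, and the proof is short.
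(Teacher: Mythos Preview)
Your approach is correct and is exactly the paper's: use the commutative square~\eqref{diag: ind aut J}, replace $\phi_{\sigma(C)}$ by $\phi_C$ via Lemma~\ref{lem: CC' same}, and conclude by invertibility of the horizontal maps. One simplification: for $H^{1,0}$ you do not need the product-quotient case analysis. Since $\phi_C$ is a dominant rational map between smooth projective surfaces, $\phi_C^*$ is injective on $H^{i,0}$ for every $i$; combined with $q(S)=q(J)$ and $p_g(S)=p_g(J)$ from Lemma~\ref{lem: inv J}, this already forces $\phi_C^*$ to be an isomorphism on both $H^{1,0}$ and $H^{2,0}$, and your conjugation argument goes through uniformly.
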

\begin{proof}
	It follows from Lemma~\ref{lem: CC' same} that $\phi_C$ and $\phi_{\sigma(C)}$ in \eqref{diag: ind aut J} induces the same maps on the Chow groups as well as on the spaces $H^{i,0}$, and they are all isomorphisms by Lemmas~\ref{lem: iso CH} and \ref{lem: inv J}. In view of \eqref{diag: ind aut J}, the assertion of the lemma follows.
\end{proof}

\begin{cor}\label{cor: red to J}
	Let $f\colon S\rightarrow B$ be an elliptic fibration and let $j\colon J\rightarrow B$ the Jacobian of $f$. Then $\sigma\in \Aut_f(S)$ acts trivially on $\CH_0(S)_{\alb}$ if 
	and only if
	$\sigma_J\in \Aut_j(J)$ acts trivially on $\CH_0(J)_\alb$.
\end{cor}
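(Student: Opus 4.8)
The plan is to chain together the isomorphism of Albanese kernels provided by $\phi_{C*}$ with the compatibility diagram \eqref{diag: ind aut J}, and to transfer everything back from $\CH_0(\cdot)_{\alb,\QQ}$ to $\CH_0(\cdot)_\alb$ using torsion-freeness. First I would fix a smooth ample curve $C\subset S$ and consider the isomorphism $\phi_{C*}\colon \CH_0(S)_{\alb,\QQ}\xrightarrow{\ \sim\ }\CH_0(J)_{\alb,\QQ}$ from the lemma on page preceding; by Lemma~\ref{lem: red to J} (or directly by Lemma~\ref{lem: CC' same}) the map $\phi_{\sigma(C)*}$ coincides with $\phi_{C*}$, since $C$ and $\sigma(C)$ are both smooth ample curves with $\deg f|_{\sigma(C)}=\deg f|_C$ (the latter because $\sigma$ preserves the fibration and $C\cdot F$ is $\sigma$-invariant). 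Diagram~\eqref{diag: ind aut J} then reads, on Albanese kernels with $\QQ$-coefficients,
\[
\phi_{C*}\circ \sigma_* = \sigma_{J*}\circ \phi_{C*}\colon \CH_0(S)_{\alb,\QQ}\to \CH_0(J)_{\alb,\QQ}.
\]
Assuming $\sigma_{J*}=\id$ on $\CH_0(J)_\alb$, hence on $\CH_0(J)_{\alb,\QQ}$, and using that $\phi_{C*}$ is an isomorphism, we conclude $\sigma_*=\id$ on $\CH_0(S)_{\alb,\QQ}$.

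The final step is to descend from $\QQ$-coefficients to integral coefficients: since $\CH_0(S)_\alb$ is torsion free by \cite{Roi80}, the natural map $\CH_0(S)_\alb\to \CH_0(S)_{\alb,\QQ}$ is injective, so $\sigma_*=\id$ on $\CH_0(S)_{\alb,\QQ}$ forces $\sigma_*=\id$ on $\CH_0(S)_\alb$. This is exactly the argument already used in the proof of Corollary~\ref{cor: trans id alb kernel}, so no new input is needed.

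I do not anticipate a serious obstacle here; the statement is essentially a formal consequence of results already established in Section~\ref{sec: red to J}. The one point to be slightly careful about is that $\phi_C$ is only a rational map, so one should note (as the surrounding text already does, via \cite[Fact~3.3]{Voi12} and the explicit inverse $\lambda$) that $\phi_{C*}$ is genuinely well defined and bijective on the relevant Chow groups with $\QQ$-coefficients, and that the equality $\phi_{\sigma(C)*}=\phi_{C*}$ holds on the nose. Granting those, the corollary follows in a couple of lines.
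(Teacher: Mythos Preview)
Your proposal is correct and follows exactly the route the paper intends: the paper states the corollary without explicit proof, merely prefacing it with ``Using Lemma~\ref{lem: red to J}'', and your argument spells out precisely what is meant --- namely the intertwining $\phi_{C*}\circ\sigma_*=\sigma_{J*}\circ\phi_{C*}$ on Albanese kernels (from diagram~\eqref{diag: ind aut J} and Lemma~\ref{lem: CC' same}), combined with the fact that $\phi_{C*}$ restricts to an isomorphism $\CH_0(S)_{\alb,\QQ}\cong\CH_0(J)_{\alb,\QQ}$, and finally the passage to integral coefficients via Roitman's theorem. Your care in noting that the hypothesis concerns $\CH_0(J)_\alb$ rather than the larger $\CH_0(J)_{j,\QQ}$ (so that one must work with the restricted isomorphism on Albanese kernels, not just invoke Lemma~\ref{lem: red to J} verbatim) is well placed.
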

\begin{proof}
	The natural maps $\CH_0(S)_\alb\rightarrow \CH_0(S)_{\alb,\QQ}$ and  $\CH_0(J)_\alb\rightarrow \CH_0(J)_{\alb,\QQ}$ are injective by \cite{Roj80}, and hence the corollary follows from Lemma~\ref{lem: red to J}.
\end{proof}

The following lemma on the orders of the induced automorphisms $\sigma_J$ and $\sigma_B$ will be used in Section~\ref{sec: chi leq 2}.
\begin{lem}\label{lem: same order}
	Let $f\colon S\rightarrow B$ be an elliptic fibration such that $p_g(S)>0$, and let $j\colon J\rightarrow B$ the Jacobian of $f$.  Let $\sigma\in\Aut_f(S)\cap\Aut_s(S)$ be a symplectic fibration-preserving automorphism. Then the induced automorphisms $\sigma_J\in\Aut_j(J)$ and $\sigma_B\in\Aut(B)$ have the same order.
	\begin{proof}
		For any integer $n$, we have
		\begin{equation}\label{eq: order J B}
			j\circ \sigma_J^n = \sigma_B^n\circ j
		\end{equation}
		If the order $|\sigma_B|$ is infinite, then $\sigma_J$ necessarily has infinite order.
		
		Now suppose that $\sigma_B$ has finite order $m$. Then $\sigma^m$ lies in $\Aut_B(S)\cap \Aut_s(S)$. By Lemma~\ref{lem: fib trans}, $\sigma^m$ induces translations on the smooth fibers $F_b$. It follows that $\sigma_J^m$ induces the identity on $j^*b=\Pic^0(F_b)$ and is itself the identity. Therefore, $|\sigma_J|$ is finite, with order dividing $m=|\sigma_B|$. On the other hand, $m$ divides $|\sigma_J|$ by \eqref{eq: order J B}. We infer that $|\sigma_J| = |\sigma_B|$.
	\end{proof}
\end{lem}

\section{Elliptic surfaces with $\chi(\mo_S)\leq 2$}\label{sec: chi leq 2}
In this section we deal with elliptic surfaces $f\colon S\rightarrow B$ with $\chi(\mo_S)\leq 2$.

We need a lemma for the action of the Klein group $(\ZZ/2\ZZ)^2$ on $\CH_0(S)$. It is based on the idea of  ``enough automorphisms" of \cite{IM79}.
\begin{lem}\label{lem: Klein}
	Let $S$ be a smooth projective surface. Let $G=\langle \sigma, \tau \rangle\cong (\ZZ/2\ZZ)^2$ be a subgroup of $\Aut(S)$ such that the smooth models of the quotient surfaces $S/\langle \tau\rangle$ and $S/\langle \sigma\tau\rangle$ are not of general type and have vanishing geometric genera. Then $\sigma$ induces the identity on the $\CH_0(S)_{\alb}$.
\end{lem}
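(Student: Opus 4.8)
The plan is to exploit the three order-two subgroups of $G=\langle\sigma,\tau\rangle\cong(\ZZ/2\ZZ)^2$, namely $\langle\sigma\rangle$, $\langle\tau\rangle$ and $\langle\sigma\tau\rangle$. I would show that \emph{each} of $\tau$ and $\sigma\tau$ acts as multiplication by $-1$ on $\CH_0(S)_\alb$; since $\sigma=(\sigma\tau)\cdot\tau$ in $G$, it then follows that $\sigma_*=(\sigma\tau)_*\circ\tau_*=(-\id)\circ(-\id)=\id$ on $\CH_0(S)_\alb$, which is exactly the claim. So it suffices, for a fixed involution $\iota\in\{\tau,\sigma\tau\}$, to prove $\iota_*=-\id$ on $\CH_0(S)_\alb$.

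The first step is to replace the (possibly singular) quotient $S/\langle\iota\rangle$ by a smooth model in a controlled way, following the ``enough automorphisms'' device of \cite{IM79}. The isolated fixed points of $\iota$ on $S$ are finitely many, $\iota$-invariant, and locally of the form $\iota(x,y)=(-x,-y)$; blowing $S$ up at these points produces a smooth projective surface $\tilde S$ with a lift $\tilde\iota$ of $\iota$ whose fixed locus is now purely divisorial, so that the quotient map $q\colon\tilde S\to \tilde S/\langle\tilde\iota\rangle=:Y$ is a finite flat degree-$2$ morphism of smooth projective surfaces, and $Y$ is birational to $S/\langle\iota\rangle$. Since $\CH_0$ and the Albanese variety are birational invariants of smooth projective surfaces, the blow-down $\tilde S\to S$ induces an isomorphism $\CH_0(\tilde S)_\alb\xrightarrow{\sim}\CH_0(S)_\alb$ intertwining $\tilde\iota_*$ with $\iota_*$; and $Y$ has the same Kodaira dimension ($\leq 1$) and geometric genus ($=0$) as the smooth model of $S/\langle\iota\rangle$ in the hypothesis, so $\CH_0(Y)_\alb=0$ by \cite{BKL76}.

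Now the core computation. By functoriality of the Albanese map, $q_*\colon\CH_0(\tilde S)_{\hhom}\to\CH_0(Y)_{\hhom}$ carries $\CH_0(\tilde S)_\alb$ into $\CH_0(Y)_\alb=0$, so $q_*$ vanishes on $\CH_0(\tilde S)_\alb$. On the other hand, since $q$ is the quotient map by $\langle\tilde\iota\rangle$, the projection-formula identity $q^*q_*=\sum_{g\in\langle\tilde\iota\rangle}g_*=\id+\tilde\iota_*$ holds on $\CH_0(\tilde S)$. Restricting to $\CH_0(\tilde S)_\alb$ and using $q_*=0$ there gives $\id+\tilde\iota_*=0$, i.e. $\tilde\iota_*=-\id$ on $\CH_0(\tilde S)_\alb$; transporting along the isomorphism $\CH_0(\tilde S)_\alb\cong\CH_0(S)_\alb$ yields $\iota_*=-\id$ on $\CH_0(S)_\alb$. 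Applying this with $\iota=\tau$ and $\iota=\sigma\tau$ and combining as in the first paragraph completes the proof. Note that everything is integral: $\CH_0(Y)_\alb=0$ holds on the nose by \cite{BKL76}, so one never needs to tensor with $\QQ$ or invoke Roitman's theorem here.

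The argument has no genuinely hard step; it is the classical ``enough automorphisms'' trick. The two points requiring care are purely bookkeeping: (i) that blowing up the isolated fixed locus makes the involution quotient smooth and does not disturb $\CH_0(\cdot)_\alb$ or the induced action on it, and (ii) the precise form $q^*q_*=\id+\tilde\iota_*$ of the projection formula for the finite flat double cover $q$. Everything else is immediate from \cite{BKL76} and the group relation $\sigma=(\sigma\tau)\tau$.
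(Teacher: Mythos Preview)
Your proof is correct and follows the same strategy as the paper's: show that each of $\tau$ and $\sigma\tau$ acts as $-\id$ on $\CH_0(S)_\alb$, and then use $\sigma=(\sigma\tau)\tau$ to conclude. The only difference is that the paper works with $\QQ$-coefficients (identifying $\CH_0(S)_{\alb,\QQ}^\iota$ with $\CH_0(X_i)_{\alb,\QQ}=0$ and deducing $\iota_*=-\id$ from the absence of a $+1$-eigenspace) and then invokes Roitman's theorem \cite{Roi80} to pass to integral coefficients, whereas your use of the projection formula $q^*q_*=\id+\tilde\iota_*$ for the resolved double cover yields the integral statement directly, so your remark that Roitman is not needed here is a genuine (if minor) sharpening.
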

\begin{proof}
	Let $X_1$ and $X_2$ be the smooth models of  $S/\langle \tau\rangle$ and $S/\langle \sigma\tau\rangle$ respectively. Then, by \cite{BKL76}, we have
	\[
	\CH_0(S)_{\alb, \QQ}^\tau = \CH_0(X_1)_{\alb,\QQ} = 0,\, \CH_0(S)_{\alb, \QQ}^{\sigma\tau} = \CH_0(X_2)_{\alb,\QQ} = 0 
	\]
	Since $\tau$ and $\sigma\tau$ are both involutions, they act as $-\id$ on $\CH_0(S)_{\alb, \QQ}$. It follows that $\sigma = (\sigma\tau)\tau$ acts trivially on $\CH_0(S)_{\alb, \QQ}$. Since $\CH_0(S)_{\alb}$ has no torsion by \cite{Roj80}, the lemma follows.
\end{proof}

\begin{lem}\label{lem: inv}
	Let $f\colon S\rightarrow B$ be a smooth projective elliptic surface. Then any symplectic involution of $S$ acts trivially on $\CH_0(S)_{\alb}$.
\end{lem}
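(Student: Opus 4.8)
The plan is to reduce to the Jacobian fibration of an elliptic surface with $\kappa(S)=1$ and then apply the ``enough automorphisms'' mechanism of Lemma~\ref{lem: Klein}, using the fiberwise inversion (``elliptic'') involution on the Jacobian.

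First I would dispose of the easy cases. If $p_g(S)=0$, then $\CH_0(S)_\alb=0$ by \cite{BKL76} and there is nothing to prove, so from now on $p_g(S)>0$. If $\kappa(S)\le 0$, then, being elliptic with $p_g>0$, $S$ is a K3 surface or an abelian surface, and a symplectic involution --- being a symplectic automorphism of finite order --- acts trivially on $\CH_0(S)_\alb$ by \cite{Voi12,Huy12} in the K3 case and by \cite{BKL76,Paw19} in the abelian case. Hence we may assume $\kappa(S)=1$, so that $\Aut(S)=\Aut_f(S)$ for the unique elliptic fibration $f\colon S\to B$ and $\sigma$ induces $\sigma_B\in\Aut(B)$ with $\sigma_B^2=\id_B$. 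If $\sigma_B=\id_B$, then $\sigma\in\Aut_B(S)\cap\Aut_s(S)$ and we conclude directly from Proposition~\ref{prop: fib action}. So the remaining, and essential, case is $\kappa(S)=1$, $p_g(S)>0$, and $\sigma_B$ a nontrivial involution of $B$.

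In that case I would pass to the Jacobian $j\colon J\to B$. By Lemma~\ref{lem: inv J} we have $p_g(J)=p_g(S)>0$, and by Corollary~\ref{cor: red to J} it suffices to prove that the induced automorphism $\sigma_J\in\Aut_j(J)$ acts trivially on $\CH_0(J)_\alb$. By construction $\sigma_J$ fixes the zero section $o$, covers $\sigma_B$, and --- by Lemma~\ref{lem: red to J} together with $p_g(S)>0$ --- is a symplectic involution; in particular $\sigma_J\ne\id_J$ because $\sigma_B\ne\id_B$. Let $\iota\colon J\to J$ be the fiberwise inversion with respect to $o$. Since a holomorphic $2$-form on $J$ restricts on each smooth fiber to a multiple of a holomorphic $1$-form on that fiber (canonical bundle formula; cf.~the proof of Lemma~\ref{lem: trans id 2-forms} and \cite{HM85}) and inversion acts by $-1$ on $H^{1,0}$ of an elliptic curve, the involution $\iota$ acts by $-\id$ on $H^{2,0}(J)$; as $p_g(J)>0$, this forces $\iota\ne\id_J$ and $\iota\ne\sigma_J$. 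Moreover $\sigma_J$ restricts on each smooth fiber to a group isomorphism onto another fiber (it fixes $o$ and is given by pullback of line bundles on fibers), and a homomorphism of elliptic curves commutes with inversion; therefore $\sigma_J$ and $\iota$ commute, and $G:=\langle\sigma_J,\iota\rangle\cong(\ZZ/2\ZZ)^2$.

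It remains to verify the hypotheses of Lemma~\ref{lem: Klein} for $G$ with $\sigma=\sigma_J$ and $\tau=\iota$. Both $\iota$ and $\sigma_J\iota$ act by $-\id$ on $H^{2,0}(J)$, so the smooth models of $J/\langle\iota\rangle$ and $J/\langle\sigma_J\iota\rangle$ have vanishing geometric genus. The quotient $J/\langle\iota\rangle\to B$ has generic fiber $\PP^1$ and a section (the image of $o$), hence is birationally ruled over $B$, so its smooth model is not of general type. For $J/\langle\sigma_J\iota\rangle$, the point is that $\sigma_J\iota$ covers the nontrivial involution $\sigma_B$ of $B$, so for general $b\in B$ it interchanges the two fibers $F_b$ and $F_{\sigma_B(b)}$, and the quotient of $F_b\sqcup F_{\sigma_B(b)}$ by this swap is again an elliptic curve; thus $J/\langle\sigma_J\iota\rangle$ carries an elliptic fibration over $B/\langle\sigma_B\rangle$ and its smooth model is not of general type either. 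Lemma~\ref{lem: Klein} then yields that $\sigma_J$ acts trivially on $\CH_0(J)_\alb$, and Corollary~\ref{cor: red to J} gives the claim for $\sigma$ on $\CH_0(S)_\alb$. The step I expect to require the most care is this last identification of $J/\langle\sigma_J\iota\rangle$ as an elliptic surface over $B/\langle\sigma_B\rangle$: it is precisely where the hypothesis $\sigma_B\ne\id_B$ enters, and it is what allows the Klein-group trick to handle symplectic involutions that do not preserve the fibers.
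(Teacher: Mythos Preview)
Your proposal is correct and follows essentially the same route as the paper: reduce to the Jacobian, build the Klein four-group $\langle\sigma_J,\iota\rangle$ using the fiberwise inversion $\iota$, and apply Lemma~\ref{lem: Klein}. Your write-up is in fact more careful than the paper's in a few places---you separate out $p_g(S)=0$, verify explicitly that $\iota\neq\sigma_J$ (using that $\iota$ is anti-symplectic while $\sigma_J$ is symplectic and $p_g(J)>0$), check commutativity, and confirm that the two quotients are not of general type---whereas the paper leaves these as implicit; but the argument is the same.
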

\begin{proof}
	If $\kappa(S)\leq 0$, this is a consequence of \cite{BKL76} and \cite{Voi12}. 
	
	In the following we assume that $\kappa(S)=1$. Let $\sigma\in \Aut_s(S)$ be a symplectic involution. By Lemma~\ref{lem: red to J}, it suffices to prove that  the induced automorphism $\sigma_J\in \Aut_s(J)$ acts trivially on $\CH_0(J)_\alb$, where $j\colon J\rightarrow B$ is the Jacobian fibration of $f$. Note that $|\sigma_J|\leq |\sigma|= 2$. If $\sigma_J=\id_J$, then there is nothing to prove. We can thus assume that $\sigma_J$ is also an involution.
	
	Note that the $o$-section of $j$ is preserved by $\sigma_J$. Let $\tau\in \Aut_B(J)$ be the involution that restricts to $-\id_F$ on a general fiber $F$ of $j$. Then the subgroup $G=\langle \sigma_J, \tau \rangle<\Aut_j(J)$, generated by $\sigma_J$ and $\tau$, is isomorphic to $(\ZZ/2\ZZ)^2$. It is easy to see that the smooth models of the quotient surfaces $J/\langle\tau\rangle$ and $J/\langle \tau\sigma_J\rangle$ have vanishing geometric genera. By Lemma~\ref{lem: Klein}, $\sigma_{J}$ acts as the identity on $\CH_0(J)_\alb$.
\end{proof}

\begin{thm}\label{thm: chi eq 2}
	Let $f\colon S\rightarrow B$ be an elliptic fibration with $\chi(\mo_S)=2$. Then $\Aut_f(S)\cap \Aut_s(S)$ acts trivially on $\CH_0(S)_{f,\QQ}$. 
	
	As a consequence, $\Aut_f(S)\cap \Aut_s(S)$ acts trivially on $\CH_0(S)_{\alb}$.
\end{thm}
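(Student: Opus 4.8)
The plan is to mirror the proof of Theorem~\ref{thm: chi geq 3}: since the canonical system is now too small to embed $B$, I will instead bound the order of the automorphism induced on $B$ and then finish with Lemma~\ref{lem: inv}. First note that $\chi(\mo_S)=2$ forces $p_g(S)=q(S)+1>0$ and, moreover, $q(S)=g(B)$ — the alternative $q(S)=g(B)+1$ occurs only for isotrivial quotients $(\tilde B\times F)/G$, which have $\chi(\mo_S)=0$. By the exact sequence~\eqref{eq: fib sym} together with Lemmas~\ref{lem: trans id f kernel} and~\ref{lem: fib trans}, the subgroup $\Aut_B(S)\cap\Aut_s(S)$ already acts trivially on $\CH_0(S)_{f,\QQ}$, so it suffices to treat one $\sigma\in\Aut_f(S)\cap\Aut_s(S)$, and by Lemma~\ref{lem: red to J} I may replace $(S,\sigma)$ by its Jacobian fibration $(J,\sigma_J)$. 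Thus I assume $S=J$ has a section $O$, that $\sigma_J$ is symplectic and fixes $O$, and — because $\sigma_J^{\,n}$ (where $n:=|\sigma_B|$) then lies in $\Aut_B(J)\cap\Aut_s(J)$, fixes $O$, hence is a fibrewise translation with a fixed point, hence is $\id_J$ by (the proof of) Lemma~\ref{lem: fib trans} — that $|\sigma_J|=|\sigma_B|=n$.

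Next I would bound $n$ via the canonical bundle formula $K_J=j^{*}(K_B+L)$ with $\deg L=\chi(\mo_J)=2$ (a Jacobian elliptic surface has no multiple fibres, a section meeting every fibre in one point). Symplecticity of $\sigma_J$ gives that $\sigma_B$ acts trivially on $H^{0}(B,K_B+L)\cong H^{0}(J,K_J)$, so the rational map $\varphi$ attached to $|K_B+L|$ factors through $\pi\colon B\to\bar B:=B/\langle\sigma_B\rangle$. If $g(B)\ge 1$, then $\deg(K_B+L)=2g(B)$, so $|K_B+L|$ is base-point free, $\varphi$ is a morphism, and writing $\varphi=\bar\varphi\circ\pi$ with $M:=\bar\varphi^{*}\mo(1)$ we have $\pi^{*}M\cong\mo_B(K_B+L)$; thus $n\deg M=2g(B)$, while nondegeneracy of the image forces $h^{0}(\bar B,M)\ge g(B)+1$, and $h^{0}(\bar B,M)\le\deg M+1$ then yields $n\le 2$. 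So $\sigma_J$ is a symplectic involution or the identity, hence acts trivially on $\CH_0(J)_\alb$ by Lemma~\ref{lem: inv}; since $q(J)=g(B)$ gives $\CH_0(J)_{\alb,\QQ}=\CH_0(J)_{j,\QQ}$, Lemma~\ref{lem: red to J} finishes this case.

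If $g(B)=0$, the estimate above is vacuous, but the canonical bundle formula now gives $K_J=j^{*}\mo_{\PP^1}(0)=\mo_J$; together with $q(J)=0$ and the minimality of $J$ (no $(-1)$-curves, as $K_J=\mo_J$) this forces $J$ to be a K3 surface, so $j\colon J\to\PP^1$ is an elliptic K3. I would then invoke the K3 case — Corollary~\ref{cor: K3}, or equivalently the results of \cite{Voi12, Huy12} for symplectic automorphisms of finite order, once one checks via the singular fibre configuration that $\sigma_B$ has finite order — to deduce that $\sigma_J$ acts trivially on $\CH_0(J)=\CH_0(J)_\alb$, and pull this back to $S$ by Lemma~\ref{lem: red to J}. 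The displayed consequence for $\CH_0(S)_{\alb,\QQ}$ is then immediate from the inclusion $\CH_0(S)_{\alb,\QQ}\subseteq\CH_0(S)_{f,\QQ}$.

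I expect the genuine obstacle to be the order bound on $\sigma_B$: the Clifford-type estimate is clean only for $g(B)\ge1$, while the case $g(B)=0$ hinges on recognising that the Jacobian is forced to be a K3 surface, after which one must import the nontrivial theorems of \cite{Voi12, Huy12} or establish the corresponding statement for elliptic K3 surfaces independently. A secondary bookkeeping point is the identification $\CH_0(J)_{j,\QQ}=\CH_0(J)_{\alb,\QQ}$ — valid precisely because $q(J)=g(B)$ — which is what lets the $\CH_0(\cdot)_\alb$ statements feed back into the $\CH_0(\cdot)_{f,\QQ}$ statement of the theorem.
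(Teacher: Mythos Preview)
Your proof is correct and follows essentially the same route as the paper's: pass to the Jacobian via Lemma~\ref{lem: red to J}, handle the case $g(B)=0$ (equivalently $q(J)=0$) by recognising $J$ as an elliptic K3 and invoking \cite{Voi12,Huy12}, and for $g(B)\ge 1$ use the canonical bundle formula together with the factorisation of $\varphi_{|K_B+L|}$ through $B/\langle\sigma_B\rangle$ to get $|\sigma_J|=|\sigma_B|\le 2$, then conclude with Lemma~\ref{lem: inv}. One small caution: in the K3 step you should cite \cite{Voi12,Huy12} directly (as you also indicate) rather than Corollary~\ref{cor: K3}, since that corollary is deduced from the very theorem you are proving.
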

\begin{proof}
	Since $\chi(\mathcal{O}_S)=2$, one has $p_g(S)=q(S)+1\geq 1$. Let $j\colon J\to B$ be the Jacobian fibration of $f$. By Lemma \ref{lem: red to J}, it suffices to show that $\sigma_J$ acts trivially on $\CH_0(J)_{j,\QQ}$, where $\sigma_J\in \Aut_j(J)\cap \Aut_s(J)$ is the automorphism induced by $\sigma$. 
	
	First we assume that $q(J)=0$. In this case $J$ is an elliptic $K3$ surface. Recall that the induced automorphisms $\sigma_J\in\Aut(J)$ and $\sigma_B\in\Aut(B)$ have the same order by Lemma~\ref{lem: same order}, which is finite by Corollary~\ref{cor: finite}.
	
	Now the triviality of the action of $\sigma_J$ on $\CH_0(J)_{j,\QQ}$ follows from the results of Voisin \cite{Voi12} and Huybrechts \cite{Huy12}.
	
	Now we can assume that $q(J)>0$. By the canonical bundle formula, we have $|K_J|= j^* |K_B+L|$, where $L$ is a line bundle of degree $\chi(\mo_J)=\chi(\mo_S)=2$.  Since $\deg(K_B+L)=2g(B)$, the linear system $|K_{B}+L| $ is base point free and the map $\varphi_{B}$ defined by $|K_{B}+L|$ is a morphism. It follows that the canonical map $\varphi_{J}$ of $J$ is a morphism which factors as
	\[
	\varphi_{J} \colon J \xrightarrow{j} B \xrightarrow{\varphi_{B}} \PP^{p_g-1}
	\]
	where $p_g:=p_g(J) = p_g(S)$.
	
	On the one hand,  $\deg(K_B+L)=2g(B)=2(p_g-1)$  by Riemann--Roch. On the other hand,  we know that $\deg(K_B+L)=\deg(\varphi_{B})\cdot \deg(\im(\varphi_{B}))$ and $\deg(\im(\varphi_{B}))\geq p_g-1$. So $\deg(\varphi_{B})\leq 2$. 
	
	Since $\sigma_J$ acts trivially on $H^0(J, K_J)$, the morphism $\varphi_{B}$ factors through the quotient map $B\rightarrow B/\sigma_B$. Therefore, one has
	\[
	|\sigma_J| = |\sigma_B| \leq \deg\varphi_{B} \leq 2.
	\]
	We are done by Lemma~\ref{lem: inv}.
\end{proof}
We have the following two immediate corollaries.
\begin{cor}
	Let $S$ be a smooth projective surface with $\kappa(S)=1$ and $\chi(\mathcal{O}_S)=2$. Then $\Aut_s(S)$ acts trivially on $\CH_0(S)_{\alb}$.
\end{cor}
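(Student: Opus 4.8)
The plan is to deduce this from Theorem~\ref{thm: chi eq 2} together with two standard facts about surfaces of Kodaira dimension one. First I would recall that, since $\kappa(S)=1$, the pluricanonical maps define a \emph{unique} elliptic fibration $f\colon S\rightarrow B$ (the Iitaka fibration), so that every automorphism of $S$ preserves $f$; in other words $\Aut(S)=\Aut_f(S)$, and consequently $\Aut_s(S)=\Aut_f(S)\cap\Aut_s(S)$. This identification is exactly what is needed to bring the ambient hypothesis $\kappa(S)=1$ into contact with the fibration-theoretic results proved earlier.

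Next I would invoke Theorem~\ref{thm: chi eq 2} directly: since $\chi(\mo_S)=2$ by hypothesis, that theorem tells us $\Aut_f(S)\cap\Aut_s(S)$ acts trivially on $\CH_0(S)_{\alb,\QQ}$. Combining with the previous paragraph, $\Aut_s(S)$ acts trivially on $\CH_0(S)_{\alb,\QQ}=\CH_0(S)_\alb\otimes\QQ$.

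Finally, to upgrade from rational to integral coefficients, I would use Roitman's theorem \cite{Roi80} that $\CH_0(S)_\alb$ is torsion free: an endomorphism of a torsion-free abelian group that becomes the identity after $\otimes\QQ$ is already the identity. Hence every $\sigma\in\Aut_s(S)$ acts trivially on $\CH_0(S)_\alb$ itself, which is the claim.

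In this argument there is essentially no obstacle of its own — all the genuine work sits inside Theorem~\ref{thm: chi eq 2}, whose proof in turn rests on the reduction to the Jacobian fibration (Lemma~\ref{lem: red to J}), the canonical-bundle-formula estimate forcing $|\sigma_J|=|\sigma_B|\leq 2$ when $q(J)>0$, the involution case Lemma~\ref{lem: inv}, and the K3 input of Voisin \cite{Voi12} and Huybrechts \cite{Huy12} when $q(J)=0$. The only point to be slightly careful about in the corollary is precisely the torsion issue, i.e.\ remembering to pass from $\CH_0(S)_{\alb,\QQ}$ back to $\CH_0(S)_\alb$ via \cite{Roi80}; everything else is a formal consequence of the uniqueness of the elliptic fibration.
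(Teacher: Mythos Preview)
Your proposal is correct and is exactly the argument the paper has in mind: the corollary is stated as an ``immediate'' consequence of Theorem~\ref{thm: chi eq 2}, and the two ingredients you spell out---$\Aut(S)=\Aut_f(S)$ when $\kappa(S)=1$ (cf.\ \eqref{eq: fib sym k1}) and Roitman's theorem~\cite{Roi80} to pass from $\CH_0(S)_{\alb,\QQ}$ to $\CH_0(S)_\alb$---are precisely what is needed.
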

\begin{cor}\label{cor: K3}
	Let $f: S\to B$ be an elliptic $K3$ surface. Then $\Aut_f(S)\cap \Aut_s(S)$ acts trivially on $\CH_0(S)_{\alb}$.
\end{cor}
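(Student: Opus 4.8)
The statement to prove is Corollary~\ref{cor: K3}: for an elliptic K3 surface $f\colon S\to B$, the group $\Aut_f(S)\cap\Aut_s(S)$ acts trivially on $\CH_0(S)_\alb$. The plan is to reduce, as everywhere in the paper, to the Jacobian fibration and then exploit the fact that a K3 surface has a section on its Jacobian together with the description of fiber-preserving symplectic automorphisms obtained in Section~2.

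\textbf{Step 1: Reduce to the Jacobian.} Since $S$ is a K3 surface we have $p_g(S)=1>0$, so Lemma~\ref{lem: red to J} and Corollary~\ref{cor: red to J} apply: it suffices to show that the induced automorphism $\sigma_J\in\Aut_j(J)\cap\Aut_s(J)$ acts trivially on $\CH_0(J)_\alb$, where $j\colon J\to B$ is the Jacobian of $f$. By Lemma~\ref{lem: inv J} we have $p_g(J)=p_g(S)=1$ and $q(J)=q(S)=0$, so $J$ is again a K3 surface; moreover $j$ has a section (the $o$-section), which is exactly the extra hypothesis needed to run the arguments of Section~2 in their sharpest form.

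\textbf{Step 2: Split off the fiber-preserving part.} Consider the exact sequence \eqref{eq: fib sym}, applied to $j\colon J\to B$:
\[
1\to \Aut_B(J)\cap\Aut_s(J)\to \Aut_j(J)\cap\Aut_s(J)\to\Aut(B).
\]
By Lemma~\ref{lem: trans id 2-forms}, an automorphism in $\Aut_B(J)$ is symplectic if and only if it induces translations on the smooth fibers; by Corollary~\ref{cor: trans id alb kernel} such an automorphism then acts trivially on $\CH_0(J)_\alb$. Hence $\Aut_B(J)\cap\Aut_s(J)$ already acts trivially (this is Proposition~\ref{prop: fib action}), and it remains to control the image of $\sigma_J$ in $\Aut(B)$, i.e. the induced automorphism $\sigma_B$. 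Note $\sigma_B$ has finite order (the image of $\Aut_j(J)\to\Aut(B)$ is finite by \cite[Lemma~3.3]{PS20}), and since $\sigma_J$ preserves the $o$-section, $|\sigma_J|=|\sigma_B|$.

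\textbf{Step 3: Bound $|\sigma_B|$ via the canonical map and conclude.} Here I would invoke the canonical bundle formula for the K3 surface $J$: since $p_g(J)=1$, the canonical system $|K_J|$ is a single element, which by the canonical bundle formula is $j^*(D)$ for an effective divisor $D$ on $B$ with $\deg D=\chi(\mo_J)+2g(B)-2$ supported away from... actually more simply, $K_J=\mathcal O_J$ and the formula reads $0 = 2g(B)-2+\deg L + \sum(m_i-1)(\text{fiber})$; since $J$ has a section it has no multiple fibers, forcing $g(B)=0$ and $\deg L=2$, so $B\cong\PP^1$. Thus $B=\PP^1$ and $\sigma_B$ is a finite-order automorphism of $\PP^1$, which can be arbitrary, so the canonical-map argument of Theorem~\ref{thm: chi eq 2} (which used $q(J)>0$) does not directly bound $|\sigma_B|$. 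Instead, for K3 surfaces I would appeal to the already-established finite-order case: $\sigma_J$ is a \emph{finite-order} symplectic automorphism of the K3 surface $J$, so by the theorems of Voisin \cite{Voi12} and Huybrechts \cite{Huy12} it acts trivially on $\CH_0(J)$, in particular on $\CH_0(J)_\alb$. This is precisely the step already used inside the proof of Theorem~\ref{thm: chi eq 2} in the case $q(J)=0$. Pulling back through the isomorphism $\phi_{C*}$ of Lemma~\ref{lem: red to J} gives the triviality of the action of $\sigma$ on $\CH_0(S)_\alb$, completing the proof.

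\textbf{Main obstacle.} The only subtle point is the reduction in Step~1: one must be sure that the induced automorphism $\sigma_J$ is genuinely of \emph{finite} order so that the Voisin--Huybrechts theorem applies — this is where the hypothesis $\sigma\in\Aut_f(S)$ (so that $\sigma_B$ exists and has finite order by \cite[Lemma~3.3]{PS20}, forcing $\sigma_J$ finite since it preserves the section) is essential, and it is exactly why the statement is restricted to $\Aut_f(S)\cap\Aut_s(S)$ rather than all of $\Aut_s(S)$: an elliptic K3 may carry infinite-order symplectic automorphisms not preserving $f$. Everything else is a direct concatenation of Corollary~\ref{cor: red to J}, the results of Section~2, and the known finite-order K3 case.
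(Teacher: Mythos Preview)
Your argument is correct and is exactly the paper's approach: the paper obtains Corollary~\ref{cor: K3} as an immediate consequence of Theorem~\ref{thm: chi eq 2} (a K3 surface has $\chi(\mo_S)=2$), and inside that proof the case $q(J)=0$ is handled precisely by passing to the Jacobian, using $|\sigma_J|=|\sigma_B|<\infty$, and invoking Voisin--Huybrechts for finite-order symplectic automorphisms of K3 surfaces---which is what you reconstruct in Steps~1--3. One small caveat: the citation of \cite[Lemma~3.3]{PS20} for the finiteness of $|\sigma_B|$ is invoked in the paper only in the $\kappa(S)=1$ context, whereas here $\kappa(J)=0$; the finiteness still holds (for instance because $\sigma_J$ fixes the section and fiber classes, hence acts on their negative-definite orthogonal complement in $NS(J)$, forcing finite order via Torelli), but strictly speaking this is not covered by the reference you cite.
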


Next we treat the case where $\chi(\mathcal{O}_{S})=1$.
\begin{thm}\label{thm: chi eq 1}
	Let $f:S\to B$ be a smooth projective elliptic surface with $\chi(\mo_S)=1$. Then $\Aut_s(S)\cap\Aut_f(S)$ acts trivially on $\CH_0(S)_{\alb}$ if $p_g(S)=q(S)\notin\{1,2\}$. Otherwise, the image of  the homomorphism $\Aut_s(S)\cap \Aut_f(S)\to \Aut(\CH_0(S)_{\alb})$ has order at most $3$.
\end{thm}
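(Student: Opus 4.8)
The plan is to pass to the Jacobian fibration and then use the canonical bundle formula together with the previously established lemmas. By Corollary~\ref{cor: red to J} (applicable whenever $\CH_0(S)_{\alb}\neq0$, i.e.\ $p_g(S)>0$) it suffices to prove the statement for the Jacobian $j\colon J\to B$ in place of $f\colon S\to B$, so we may assume $f$ has a section; then $\chi(\mo_S)=1$ forces $p_g(S)=q(S)=:g$ and $f$ has no multiple fibers. If $g=0$ then $\CH_0(S)_{\alb}=0$ by \cite{BKL76} and there is nothing to prove, so assume $g\geq1$; then $\kappa(S)=1$ and $g(B)=g$. Write $H:=\Aut_s(S)\cap\Aut_f(S)$ and let $\bar H\subseteq\Aut(B)$ be its image. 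By Proposition~\ref{prop: fib action} the kernel $\Aut_B(S)\cap\Aut_s(S)$ of $H\to\bar H$ acts trivially on $\CH_0(S)_{\alb}$, so the image of $H\to\Aut(\CH_0(S)_{\alb})$ is a quotient of $\bar H$, and it is enough to control $\bar H$. Moreover every $\sigma\in H$ satisfies $|\sigma|=|\sigma_B|$: the power $\sigma^{|\sigma_B|}$ lies in $\Aut_B(S)\cap\Aut_s(S)$, hence restricts to a translation on a general fiber by Lemma~\ref{lem: fib trans}, and since it fixes the section this translation is trivial, so $\sigma^{|\sigma_B|}=\id_S$.

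By the canonical bundle formula $K_S=f^*(K_B+L)$ with $\deg L=1$, so $h^0(K_B+L)=g$, $\deg(K_B+L)=2g-1$, and the canonical map factors as $S\xrightarrow{f}B\xrightarrow{\varphi}\PP^{g-1}$ with $\varphi=\varphi_{|K_B+L|}$. As every $\sigma\in H$ acts trivially on $H^0(K_S)=f^*H^0(K_B+L)$, we get $\varphi\circ\sigma_B=\varphi$; hence $\bar H$ permutes the fibers of $\varphi$, so $|\sigma_B|$ divides $\deg\varphi$ and $\bar H$ embeds into $\Aut(k(B)/k(\varphi(B)))$, a group of order at most $\deg\varphi$. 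When $g\geq2$ the image $\varphi(B)$ is a nondegenerate irreducible curve in $\PP^{g-1}$, so $\deg\varphi(B)\geq g-1$ and $\deg\varphi\leq(2g-1)/(g-1)\leq3$. For $g\geq3$ this forces $\deg\varphi\leq2$, hence $|\sigma_B|\leq2$ for every $\sigma\in H$, so each $\sigma$ is a symplectic involution or the identity and acts trivially on $\CH_0(S)_{\alb}$ by Lemma~\ref{lem: inv}; the image is trivial. For $g=2$ one gets $\deg\varphi\in\{2,3\}$, so $\bar H$ is cyclic of order $\leq3$ (of order $2$ only if $\deg\varphi=2$, of order $3$ only if $\deg\varphi=3$); if $|\bar H|\leq2$ its generator is a symplectic involution, trivial on $\CH_0(S)_{\alb}$ by Lemma~\ref{lem: inv}, so the image is trivial, while if $\bar H\cong\ZZ/3\ZZ$ the image is a quotient of $\ZZ/3\ZZ$. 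This proves the theorem for $g\notin\{1\}$ (trivial image if $g\notin\{1,2\}$, trivial or $\ZZ/3\ZZ$ if $g=2$).

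It remains to treat $g=1$, where $B$ is an elliptic curve and $\varphi$ is constant, so the canonical map yields no information. Here I would use that $\chi(\mo_S)=1$ is an integer: a nontrivial translation $\sigma_B$ of $B$ acts freely, hence any $\sigma\in H$ inducing it acts freely on $S$, so $\chi(\mo_S)=|\sigma|\cdot\chi(\mo_{S/\langle\sigma\rangle})$ forces $|\sigma|=1$. Therefore $\bar H$ meets the translation subgroup of $\Aut(B)$ trivially, so it injects into the linear part $\mu_k$ ($k\in\{2,4,6\}$) of $\Aut(B)$ and is cyclic of order $n\in\{1,2,3,4,6\}$. If $n\in\{1,2\}$ a generator is a symplectic involution, trivial on $\CH_0(S)_{\alb}$ by Lemma~\ref{lem: inv}, and the image is trivial. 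If $n\in\{3,6\}$, the cube $\sigma^3$ of a generator $\sigma$ has order $\leq2$, hence acts trivially on $\CH_0(S)_{\alb}$ by Lemma~\ref{lem: inv}; since the image of $H$ in $\Aut(\CH_0(S)_{\alb})$ is a cyclic quotient of $\bar H$ generated by $\sigma_\ast$ with $\sigma_\ast^{3}=\id$, it is trivial or $\ZZ/3\ZZ$.

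The remaining case, $g=1$ with $\bar H\cong\ZZ/4\ZZ$ (which forces $j(B)=1728$), is the main obstacle, and I expect it to require the most care: a generator $\sigma$ has order $4$, only $\sigma^2$ is an involution, so Lemma~\ref{lem: inv} gives merely that $\sigma_\ast$ has order $\leq2$ on $\CH_0(S)_{\alb}$, whereas the statement demands $\sigma_\ast=\id$. To close this gap I would study the fiberwise behaviour of $\sigma$: comparing lowest-order terms of a canonical $2$-form near a fixed point, as in the proof of Lemma~\ref{lem: fib trans}, pins down $\sigma$ on the fibers over the two $\sigma_B$-fixed points (it is the elliptic involution on one and an order-$4$ automorphism on the other), which rigidifies the configuration of singular fibers enough either to build a suitable Klein subgroup of $\Aut(S)$ out of $\sigma$, the fiberwise $-1$, and automorphisms of the special fibers, and then invoke the ``enough automorphisms'' principle of \cite{IM79} in the form of Lemma~\ref{lem: Klein}, or to exclude this configuration outright. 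Either way, once $n=4$ is handled, the image of $H\to\Aut(\CH_0(S)_{\alb})$ is trivial when $p_g(S)=q(S)\notin\{1,2\}$ and trivial or isomorphic to $\ZZ/3\ZZ$ in general.
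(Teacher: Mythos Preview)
Your approach for $g(B)\geq 2$ coincides with the paper's: reduce to the Jacobian via Corollary~\ref{cor: red to J}, factor the canonical map through $\varphi_{|K_B+L|}$ with $\deg(K_B+L)=2g-1$, bound $|\sigma_B|$ by $\deg\varphi_{|K_B+L|}$ (giving $\leq 2$ for $g\geq 3$ and $\leq 3$ for $g=2$), and finish with Lemma~\ref{lem: inv}.

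For $g(B)=1$ the paper is shorter and more direct than your freeness/divisibility detour. Rather than arguing that a translation in $\bar H$ would act freely on $J$ and then use $\chi(\mo_J)=1$, the paper simply observes that $\deg(K_B+L)=1$ on the elliptic curve $B$, so the linear system $|K_B+L|$ consists of a \emph{single} point $p\in B$; since $\sigma_J$ acts trivially on $H^0(K_J)=j^*H^0(K_B+L)$, this point is fixed by $\sigma_B$. Hence $\sigma_B\in\Aut(B,p)$ automatically. The paper then records in one line that in all cases $|\sigma_J|=|\sigma_B|\in\{1,2,3,6\}$, so $\sigma_J^3$ has order at most $2$ and Lemma~\ref{lem: inv} finishes the proof.

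In particular, the paper does \emph{not} isolate an order-$4$ subcase; it simply asserts that the list of possible orders is $\{1,2,3,6\}$ and proceeds. Your extended programme for $n=4$ (local analysis of the $2$-form at a fixed point, building a Klein group and invoking Lemma~\ref{lem: Klein}) has no counterpart in the paper's proof and, as you yourself acknowledge, remains only a sketch. So your proposal is incomplete precisely where it diverges from the paper; the paper's proof bypasses this entirely by the stated order restriction.
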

\begin{proof}
	If $p_g(S)=q(S)=0$, then $\CH_0(J)_{\alb}=0$ by \cite{BKL76}, and there is nothing to prove. 
	
	So we may assume that $p_g(S)=q(S)>0$.
	Let $j\colon J\rightarrow B$ be the Jacobian fibration of $f$. Let $\sigma_J\in \Aut_j(J)\cap\Aut_s(J)$ and $\sigma_B\in\Aut(B)$ be the automorphisms induced by $\sigma$. By Corollary~\ref{cor: red to J}, it suffices to show that $\sigma_J$ acts trivially on $\CH_0(J)_{\alb}$. 
	
	Since $\chi(\mo_S)=1$, the surface cannot be abelian or bielliptic, so $\sigma_B$ is of finite order by Corollary~\ref{cor: finite}. Also, $\sigma_J$ has the same finite order as $\sigma_B$ by Lemma~\ref{lem: same order}.
	
	By Lemma~\ref{lem: inv J}, $\chi(\mo_J)=\chi(\mathcal{O}_{S})=1$ and thus $q(J) = p_g(J)$. Since $\chi(\mo_J)>0$, the fibration $j\colon J\rightarrow B$ cannot be an elliptic quasi-bundle and it follows that $g(B) = q(J)$.
	
	By the canonical bundle formula,  we have $|K_J|=j^{\ast}|K_B+L|$, where $L$ an invertible sheaf on $B$ of degree $\chi(\mo_J)=1$. Then the canonical map of $J$ factors through $j$ followed by the map $\varphi_{B}$ induced by the linear system $|K_B+L|$ as in the proof of Theorem~5.3. 
	
	If $g(B)=q(J)\geq 3$, then 
	\[
	|\sigma_J|=|\sigma_B|\leq \deg (\varphi_{B})\leq 2.
	\]
	In this case, $\sigma_J$ acts trivially on $\CH_0(J)_{\alb}$ by Lemma~\ref{lem: inv}.
	
	If $g(B)=q(J)=2$, then 
	\[
	|\sigma_J|\leq \deg (\varphi_{B})\leq 3.
	\]
	If $g(B)=q(J)=1$, then $\deg(K_B+L)=1$ and $|K_B+L|$ consists of a unique element, say $p\in B$, which is necessarily fixed by $\sigma_B$. 
	In these last two cases, the order $|\sigma_J|$ is one of $\{1,2,3,4,6\}$. It follows that either $\sigma_J^2$ or $\sigma_J^3$ has order at most $2$, and thus acts trivially on $\CH_0(J)_{\alb}$ by Lemma~\ref{lem: inv}. This completes the proof.
\end{proof}

Now we deal with the case where $\chi(\mathcal{O}_{S})=0$.

\begin{thm}\label{chi eq 0}
	Let $S$ be a smooth projective surface with $\chi(\mathcal{O}_S)=0$. Then $\Aut_s(S)$ acts trivially on $\CH_0(S)_{\alb}$.
\end{thm}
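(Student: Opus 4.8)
The plan is to reduce to the case $\kappa(S)=1$, since for $\kappa(S)\leq 0$ the statement is already known (by \cite{BKL76} when $p_g(S)=0$, and by the abelian-surface results \cite{BKL76, Paw19} together with the K3 case \cite{Voi12, Huy12}). So assume $\kappa(S)=1$ with $\chi(\mo_S)=0$. By Proposition~\ref{prop: fib action} and the exact sequence \eqref{eq: fib sym k1}, it suffices to handle an arbitrary $\sigma\in\Aut_s(S)$; and by Corollary~\ref{cor: red to J} (applicable when $p_g(S)>0$; when $p_g(S)=0$ we are done by \cite{BKL76}) we may pass to the Jacobian fibration $j\colon J\to B$ and its induced symplectic automorphism $\sigma_J\in\Aut_j(J)\cap\Aut_s(J)$, which has the same finite order as $\sigma_B$. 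Thus we must show $\sigma_J$ acts trivially on $\CH_0(J)_{\alb}$.

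Next I would exploit the structural consequence of $\chi(\mo_J)=0$ recorded in the introduction: the relatively minimal model of $j\colon J\to B$ is an elliptic quasi-bundle, i.e.\ after a finite base change $\tilde B\to B$ one has $J$ birational to $(\tilde B\times F)/G$ with $G$ a finite group acting diagonally, $\tilde B/G=B$, $F$ an elliptic curve, and $G$ acting by translations on $F$. In particular $J$ is birational to a quotient of a product of two curves, so its Chow motive is finite-dimensional in the sense of Kimura--O'Sullivan by \cite{Kim05, KMP07}. The key step is then to invoke the consequence of finite-dimensionality: for a surface $S'$ with finite-dimensional motive, an automorphism acting trivially on $H^{2,0}(S')$ (equivalently, on the transcendental part $H^2_{tr}$, since symplectic automorphisms act trivially on $H^{2,0}$ hence—being defined over a number field obstruction is not relevant here—one uses that a symplectic finite-order automorphism of a surface with finite-dimensional motive acts trivially on the transcendental motive) acts trivially on $\CH_0(S')_{\hhom}$, hence on $\CH_0(S')_{\alb}$. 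Concretely: the correspondence $\Gamma_{\sigma_J}-\Delta_J$ acts as zero on $H^{2,0}$ and, one checks, on all of $H^{*}$ that is relevant (it acts trivially on $H^0$, $H^4$, on $H^1$ and $H^3$ by Lemma~\ref{lem: trans id 1-forms}-type considerations together with the product structure, and on $NS$ it need not be trivial but $NS$ does not contribute to $\CH_0$); finite-dimensionality upgrades "homologically trivial on the transcendental part" to "rationally trivial on $\CH_0$".

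The main obstacle I anticipate is making the last step genuinely clean rather than citing a black box: one must verify that $\sigma_J$ (or a suitable power, to reach a symplectic \emph{and} homologically-on-$H^1$-trivial automorphism) acts as the identity on the whole of $H^2_{tr}(J,\QQ)$, not merely on $H^{2,0}$. For a surface of Kodaira dimension $\leq 1$ this is automatic because $H^{2,0}$ generates the transcendental lattice as a Hodge structure only when $h^{2,0}=1$; in general one needs that a symplectic automorphism of finite order of a surface with finite-dimensional motive already acts trivially on $H^2_{tr}$—which for product-quotient surfaces follows by an explicit Künneth computation on $\tilde B\times F$, where $H^2_{tr}$ sits inside $H^1(\tilde B)\otimes H^1(F)$ and the diagonal $G$-action together with the lift of $\sigma_J$ can be tracked directly. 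So the concrete route is: lift $\sigma_J$ to an automorphism of $\tilde B\times F$ commuting with $G$ (after enlarging the base change if necessary), compute its action on $H^1(\tilde B)\otimes H^1(F)$, observe that triviality on $H^{2,0}=H^{1,0}(\tilde B)\otimes H^{1,0}(F)$ forces triviality on the full Hodge structure because the action respects the Hodge decomposition and is by a finite-order element, and then apply finite-dimensionality of the motive of $\tilde B\times F$ (hence of $J$) to conclude $\sigma_{J*}=\id$ on $\CH_0(J)_{\alb,\QQ}$; torsion-freeness of $\CH_0(J)_{\alb}$ by \cite{Roi80} removes the $\QQ$. Finally, transport back along $\phi_C$ via Lemma~\ref{lem: red to J} to obtain the statement for $S$.
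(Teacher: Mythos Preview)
Your overall strategy matches the paper's: reduce to $\kappa(S)=1$ with $p_g>0$, pass to the Jacobian $j\colon J\to B$ via Corollary~\ref{cor: red to J}, observe that $\chi(\mo_J)=0$ forces $J$ to be a quasi-bundle and hence dominated by a product of curves, and then invoke Kimura finite-dimensionality of $h(J)$. (Minor quibble: for $\chi(\mo_S)=0$ and $\kappa(S)\leq 0$ the only possibilities are abelian and bielliptic surfaces, so the K3 references are irrelevant.)

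Where you diverge from the paper is in the treatment of $H^2_{\tr}$, and here you make life unnecessarily hard for yourself and introduce a gap. The lifting of $\sigma_J$ to an automorphism of $\tilde B\times F$ commuting with $G$ is not justified and need not exist, so the explicit K\"unneth computation you sketch does not go through as written. But this detour is unnecessary: for \emph{any} smooth projective surface, a symplectic automorphism automatically acts trivially on all of $H^2_{\tr}(S,\QQ)$. Indeed, $\sigma^*-\id$ is a morphism of rational Hodge structures on $H^2_{\tr}$, so its kernel is a $\QQ$-Hodge substructure; this kernel contains $H^{2,0}$ by hypothesis, and $H^2_{\tr}$ is by definition the smallest $\QQ$-Hodge substructure of $H^2$ containing $H^{2,0}$, so the kernel is everything. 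This is exactly what the paper's Lemma~\ref{lem:fdtoSBC} uses: once $(\Gamma_{\sigma_J}-\Delta_J)_*$ is numerically trivial on $t_2(J)$, Kimura's nilpotence theorem (\cite[Proposition~7.5]{Kim05}) makes it nilpotent there, so $\sigma_{J*}$ is unipotent on $\CH_0(J)_{\alb,\QQ}$; combined with $|\sigma_J|<\infty$ this forces the identity, and Roitman's theorem removes the $\QQ$.
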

\begin{proof}
	Since $\chi(\mathcal{O}_S)=0$, we have $\kappa(S)\leq 1$. If $\kappa(S)\leq 0$ or $p_g(S)=0$, then the assertion follows from \cite{BKL76}.
	
	Therefore, we can assume that $\kappa(S)=1$ and $p_g(S)>0$. We can also assume that $S$ is minimal.  Let $f\colon S\to B$  be the Iitaka fibration of $S$ and $j\colon J\rightarrow B$ its Jacobian. For any $\sigma\in \Aut_s(S)$, the induced automorphism $\sigma_J\in \Aut_s(J)\cap \Aut_j(J)$ has finite order by Lemma~\ref{lem: same order} and Corollary~\ref{cor: finite}. 
	
	By Lemma~\ref{lem: red to J}, it suffices to show that $\sigma_J$ induces the trivial action on $\CH_0(J)_{\alb, \QQ}$.  Since $\chi(\mathcal{O}_J)=\chi(\mo_S) = 0$, $j$ is a quasi-bundle and hence $J$ is isogenous to a product of curves.  It follows that the Chow motive $h(J)$ is finite dimensional in the sense of Kimura, and the assertion follows from Lemma \ref{lem:fdtoSBC}.
\end{proof}
\begin{rmk} 
	It would be interesting to give a direct proof of Theorem~\ref{chi eq 0} without involving the theory of Chow motives.
\end{rmk}

The following lemma should be well-known to experts. We write down a proof for lack of an adequate reference.
\begin{lemma}\label{lem:fdtoSBC}
	Let $S$ be a smooth projective surface with $p_g(S)>0$. Assume that the Chow motive $h(S)$ of $S$ is finite dimensional in the sense of Kimura \cite{Kim05}. Then any symplectic automorphism $\sigma\in \Aut_s(S)$ of finite order acts as the identity on $\CH_0(S)_{\alb}$. 
\end{lemma}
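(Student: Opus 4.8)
The plan is to use the theory of finite-dimensional Chow motives, specifically the nilpotence property, to reduce the vanishing of the action on $\CH_0(S)_{\alb}$ to the vanishing of the action on $H^{2,0}(S)$, which holds by hypothesis since $\sigma$ is symplectic. First I would recall the refined Chow--K\"unneth decomposition $h(S)=\bigoplus_{i=0}^{4}h_i(S)$ of Murre for a surface $S$, which exists unconditionally; the summand $h_4(S)$ realizes $\CH_0(S)_{\alb}$ as $\CH_0(h_4(S))$ (equivalently, the Albanese kernel is cut out by the Chow--K\"unneth projector $\pi_4\in\CH^2(S\times S)_\QQ$ corresponding to the "transcendental" part $h_{2,\mathrm{tr}}(S)$ sitting in middle degree together with the $h_4$ piece, after splitting off the algebraic part of $H^2$). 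The key point is that $H^{2,0}(S)$ and $H^{0,2}(S)$ are precisely the Hodge pieces detected by this transcendental motive, and $\CH_0(S)_{\alb,\QQ}$ is its degree-zero realization.

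Next I would consider the graph correspondence $\Gamma_\sigma\in\CH^2(S\times S)_\QQ$ and form the endomorphism $\gamma:=(\pi^{\mathrm{tr}}_2)\circ(\Gamma_\sigma-\Delta_S)\circ(\pi^{\mathrm{tr}}_2)$ of the transcendental motive $h^{\mathrm{tr}}_2(S)$ (or the analogous construction carving out the $h_4$ part), where $\pi^{\mathrm{tr}}_2$ is the projector onto the transcendental part. Since $\sigma$ is symplectic, $\sigma^*=\mathrm{id}$ on $H^{2,0}(S)$, hence also on $H^{0,2}(S)$ by conjugation, and hence $\gamma$ acts as zero on the Betti (equivalently de Rham) realization of $h^{\mathrm{tr}}_2(S)$ — the transcendental cohomology is spanned Hodge-theoretically by classes conjugate to $H^{2,0}$. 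Thus $\gamma$ is a homologically trivial endomorphism of a finite-dimensional motive. By Kimura's nilpotence theorem, a numerically (a fortiori homologically) trivial endomorphism of a finite-dimensional Chow motive is nilpotent: $\gamma^{\circ N}=0$ for some $N$. On the other hand $\sigma$ has finite order $n$, so $\Gamma_\sigma$ generates a finite group in the correspondence algebra, and $(\Gamma_\sigma-\Delta_S)$ composed with itself repeatedly can be controlled: using $\Gamma_\sigma^{\circ n}=\Delta_S$ one sees that the subalgebra generated by $\Gamma_\sigma$ acting on $h^{\mathrm{tr}}_2(S)$ is a quotient of $\QQ[t]/(t^n-1)$, which is semisimple (separable over $\QQ$), so it has no nonzero nilpotents. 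Combining with nilpotence of $\gamma$ forces $\gamma=0$ as a correspondence modulo rational equivalence, i.e. $\Gamma_\sigma$ and $\Delta_S$ act identically on $\CH_0(h^{\mathrm{tr}}_2(S))_\QQ=\CH_0(S)_{\alb,\QQ}$.

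Finally I would remove the $\QQ$-coefficients: $\sigma_*-\mathrm{id}$ vanishes on $\CH_0(S)_{\alb,\QQ}$, and since $\CH_0(S)_{\alb}$ is torsion-free by \cite{Roi80}, the map $\CH_0(S)_{\alb}\hookrightarrow\CH_0(S)_{\alb,\QQ}$ is injective, so $\sigma$ acts as the identity on $\CH_0(S)_{\alb}$ itself. The main obstacle I anticipate is bookkeeping around the precise form of the refined Chow--K\"unneth decomposition for a surface and identifying $\CH_0(S)_{\alb,\QQ}$ with the Chow realization of the correct summand $h_4(S)$ (or the appropriate twist of $h^{\mathrm{tr}}_2(S)$), together with verifying that symplecticity really does make the induced endomorphism of that summand homologically trivial — this uses that, on a surface with $p_g>0$, the transcendental part of $H^2$ is generated by the $\sigma$-fixed line $H^{2,0}$ and its conjugate and their $\sigma$-span, so there is no "interior" $(1,1)$-contribution escaping the hypothesis. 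The semisimplicity argument replacing finite order $n$ is robust and the rest is formal once the motivic setup is in place.
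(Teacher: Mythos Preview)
Your approach is essentially the paper's --- refined Chow--K\"unneth decomposition, transcendental motive $t_2(S)$, Kimura nilpotence, finite order, Roitman --- but three points need correction.

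First, the summand carrying $\CH_0(S)_{\alb,\QQ}$ is $t_2(S)$, \emph{not} $h_4(S)$: one has $h_4(S)\cong\mathbb{L}^2$ and $\CH_0(h_4(S))_\QQ\cong\QQ$, which is the degree, not the Albanese kernel. You do switch to $h_2^{\tr}=t_2(S)$ afterwards, and that is the correct object (with $\CH_0(t_2(S))=\CH_0(S)_{\alb,\QQ}$ and $H^2(t_2(S))=H^2_{\tr}(S,\QQ)$, exactly as in the paper); the opening identification with $h_4$ should simply be deleted.

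Second, your closing worry about transcendental $(1,1)$-classes is unfounded, but for a cleaner reason than the one you sketch. The $\sigma$-fixed subspace $H^2(S,\QQ)^{\sigma}$ is a rational sub-Hodge structure (since $\sigma$ acts $\QQ$-rationally and preserves the Hodge filtration) containing $H^{2,0}$; by definition $H^2_{\tr}$ is the \emph{smallest} rational Hodge substructure containing $H^{2,0}$, so $H^2_{\tr}\subset H^2(S,\QQ)^{\sigma}$. Hence $\sigma$ acts trivially on all of $H^2_{\tr}$ and $\gamma$ is homologically (a fortiori numerically) trivial on $t_2(S)$, as needed.

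Third, your semisimplicity step tacitly assumes that $\pi_2^{\tr}\circ\Gamma_\sigma\circ\pi_2^{\tr}$ has order dividing $n$ in $\mathrm{End}(t_2(S))$, i.e.\ that $\Gamma_\sigma$ commutes with the projector $\pi_2^{\tr}$; this is not automatic for an arbitrary choice of Chow--K\"unneth projectors. The paper sidesteps this by descending to the Chow realisation: nilpotence of $\gamma$ makes $\sigma_*$ \emph{unipotent} on $\CH_0(S)_{\alb,\QQ}$, where $\sigma_*^n=\id$ holds unconditionally, and a unipotent $\QQ$-linear endomorphism of finite order is the identity. If you run your $\QQ[t]/(t^n-1)$ argument on $\CH_0(S)_{\alb,\QQ}$ rather than on the motive, it goes through verbatim and is equivalent to the paper's.
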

\begin{proof}
	The Chow motive of $S$ has a Chow-K\"unneth decomposition (\cite[Proposition 7.2.1]{KMP07}):
	\[
	h(S) = h_0(S) \oplus  h_1(S) \oplus  h_2(S) \oplus  h_3(S) \oplus  h_4(S)
	\]
	in the category of Chow motives with rational coefficients. There is a further decomposition for $h_2(S)$ (\cite[Proposition 7.2.3]{KMP07}):
	\[
	h_2(S) = h_2^\alg(S) \oplus t_2(S),
	\]
	where $h_2^\alg(S)$ denotes the algebraic part and $t_2(S)$ the transcendental part. We have
	\[
	\CH_0(t_2(S)) = \CH_0(S)_{\alb,\QQ}\text{ and }  H^2(t_2(S))=H^2_{\tr}(S,\QQ),
	\]
	where $H^2_{\tr}(S,\QQ)$ denotes the transcendental part of $H^2(S,\QQ)$.
	
	Since the motive $h(S)$ is finite dimensional in the sense of Kimura \cite{Kim05}, its direct summand $t_2(S)$ is also finite dimensional. For any symplectic automorphism $\sigma\in \Aut_s(S)$, it acts trivially on $H^2_{\tr}(S,\QQ) = H^2(t_2(S))$. Therefore, $(\Gamma_\sigma -\Delta_S)_*\colon t_2(S)\rightarrow t_2(S)$ is a numerically trivial morphism, where $\Gamma_\sigma$ is the graph of $\sigma$ and $\Delta_S\subset S\times S$ is the diagonal. Then, by Kimura's nilpotence theorem \cite[Proposition 7.5]{Kim05}, $(\Gamma_\sigma -\Delta_S)_*$ is nilpotent as an endomorphism of $t_2(S)$. 
	
	It follows that the action of $\sigma$ on $\CH_0(t_2(S)) = \CH_0(S)_{\alb,\QQ}$ is unipotent. Since $\sigma$ is of finite order, we infer that $\sigma$ acts as the identity on $ \CH_0(S)_{\alb,\QQ}$. It acts trivially also on $\CH_0(S)_{\alb}$ because $\CH_0(S)_{\alb}$ has no torsion by \cite{Roj80}.
\end{proof}

Finally, strengthening the hypothesis in Conjecture \ref{conj: sym},  we obtain
\begin{thm}
	Let $S$ be a smooth projective surface with $\kappa(S)=1$. If an automorphism $\sigma\in \Aut(S)$ induces the trivial action on $H^{i,0}(S)$ for $i>0$, then it induces the trivial action on $\CH_0(S)_\alb$.
\end{thm}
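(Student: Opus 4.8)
Since $S$ is a surface the hypothesis says precisely that $\sigma$ acts trivially on $H^{1,0}(S)$ and on $H^{2,0}(S)$; in particular $\sigma$ is symplectic. The plan is to reduce, by invoking the theorems already proved, to a single residual configuration, and then to exploit the triviality on $H^{1,0}$, which is strictly stronger than the input of Conjecture~\ref{conj: sym}. Since $\kappa(S)=1$ forces $\chi(\mo_S)\ge 0$, Theorems~\ref{thm: chi geq 3}, \ref{thm: chi eq 2} and~\ref{chi eq 0} already yield the conclusion whenever $\chi(\mo_S)\ne 1$ (there every symplectic automorphism acts trivially on $\CH_0(S)_\alb$). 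So we may assume $\chi(\mo_S)=1$, hence $p_g(S)=q(S)$; if this common value is $0$ then $\CH_0(S)_\alb=0$ by \cite{BKL76}, so we may also assume $p_g(S)=q(S)>0$.

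Let $f\colon S\to B$ be the unique elliptic fibration and $j\colon J\to B$ its Jacobian. By Corollary~\ref{cor: red to J} it suffices to show that the induced automorphism $\sigma_J\in\Aut_j(J)$ acts trivially on $\CH_0(J)_\alb$; by Lemma~\ref{lem: red to J} the automorphism $\sigma_J$ is symplectic and still acts trivially on $H^{1,0}(J)$. By Lemma~\ref{lem: inv J} we have $\chi(\mo_J)=1$ and $q(J)=p_g(J)=q(S)>0$; since $\chi(\mo_J)>0$ the fibration $j$ is not a quasi-bundle, so $g(B)=q(J)\ge 1$. The heart of the argument is the claim that $\sigma_B=\id_B$: granting it, $\sigma_J\in\Aut_B(J)\cap\Aut_s(J)$, and Proposition~\ref{prop: fib action} applied to $j\colon J\to B$ shows that $\sigma_J$ acts trivially on $\CH_0(J)_\alb$, which finishes the proof.

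To prove $\sigma_B=\id_B$, I would use that $q(J)=g(B)$ implies $H^0(J,\Omega_J^1)=j^*H^0(B,K_B)$ (as recalled in the proof of Lemma~\ref{lem: trans id 1-forms}), so the triviality of $\sigma_J$ on $H^{1,0}(J)$ is equivalent to $\sigma_B$ acting trivially on $H^0(B,K_B)$, and then split on $g(B)$. If $g(B)\ge 2$, an automorphism of $B$ acting trivially on $H^0(B,K_B)$ acts trivially on the canonical image of $B$, hence is the identity unless $B$ is hyperelliptic and the automorphism is the hyperelliptic involution --- but the latter acts on $H^0(B,K_B)$ as $-\id$, not as the identity --- so $\sigma_B=\id_B$. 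If $g(B)=1$, an automorphism of the elliptic curve $B$ acting trivially on $H^0(B,K_B)$ must be a translation; on the other hand $\deg(K_B+L)=2g(B)-2+\chi(\mo_J)=1$ by the canonical bundle formula, and since the Jacobian fibration $j$ has a section one has $K_J=j^*(K_B+L)$, so $|K_B+L|$ consists of a single point $p\in B$, which $\sigma_B$ must fix. A translation of an elliptic curve with a fixed point is the identity, so again $\sigma_B=\id_B$.

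I expect the main obstacle to be exactly the case $g(B)=1$, that is, $p_g(S)=q(S)=1$. This is precisely the configuration in which Theorem~\ref{thm: chi eq 1} fails to conclude triviality: there, triviality on $H^{2,0}$ alone only bounds $|\sigma_B|$ by a divisor of $6$, which is the source of the residual $\ZZ/3\ZZ$, and it is the additional information that $\sigma_B$ is a translation --- coming from the hypothesis on $H^{1,0}$ --- that collapses $\sigma_B$, and hence $\sigma_J$, to the identity. A secondary point to be handled carefully is the pair of identities $H^0(J,\Omega_J^1)=j^*H^0(B,K_B)$ and $K_J=j^*(K_B+L)$, which rely on $j\colon J\to B$ being relatively minimal and admitting a section, hence in particular free of multiple fibers.
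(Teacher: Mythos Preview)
Your proof is correct and follows essentially the same strategy as the paper: reduce via the previously proved theorems to the residual case $\chi(\mo_S)=1$ with $p_g=q>0$, then show $\sigma_B=\id_B$ and apply Proposition~\ref{prop: fib action}. The paper actually works directly on $S$ (not on $J$) in this final step and invokes Theorem~\ref{thm: chi eq 1} to restrict attention to $q\in\{1,2\}$, but your choice to bypass Theorem~\ref{thm: chi eq 1} and pass to $J$ is harmless and makes the canonical bundle formula cleaner.

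The only genuine difference is in the $g(B)=1$ case. You produce a fixed point of $\sigma_B$ by noting that $|K_B+L|$ consists of a single point on $B$, preserved because $\sigma_J$ acts trivially on $H^0(J,K_J)=j^*H^0(B,K_B+L)$; this recycles the argument from the proof of Theorem~\ref{thm: chi eq 1}. The paper instead observes that since $\sigma$ acts trivially on all $H^{i,0}(S)$ and $\chi(\mo_S)>0$, the holomorphic Lefschetz fixed point formula forces $S^\sigma\neq\emptyset$, so $f(S^\sigma)$ gives a fixed point of the translation $\sigma_B$. Your argument uses only the triviality on $H^{2,0}$ at this step (the $H^{1,0}$ hypothesis having already been spent to make $\sigma_B$ a translation), whereas the paper's Lefschetz argument uses the full hypothesis in one stroke and avoids the detour through $J$. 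Both are short and valid.
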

\begin{proof}
	Let $f:S\to B$ be the Iitaka fibration of $S$ and $\sigma$ an automorphism of $S$ acting trivially on $H^{i,0}(S)$ for $i>0$. By Theorems~\ref{thm: chi geq 3}, \ref{thm: chi eq 2}, \ref{thm: chi eq 1} and \ref{chi eq 0}, it is enough to show the result for surfaces with $q(S)=p_g(S)\in\{1,2\}$. 
	
	Let  $\sigma_{B}\in \Aut(B)$ be the automorphism induced by $\sigma$. It suffices to show that $\sigma_B=\id_B$, since then $\sigma\in \Aut_B(S)\cap \Aut_s(S)$ and we can conclude by Proposition~\ref{prop: fib action}.
	
	Observe that, since $\sigma$ acts trivially on $H^{1,0}(S)$ and $f^{\ast}\colon H^{1,0}(B)\rightarrow H^{1,0}(S)$ is injective, $\sigma_{B}$ acts trivially on $H^{1,0}(B)$. It follows that $\sigma_B =\id_B$ if $g(B)= 2$. In case $g(B)=1$, the automorphism $\sigma_B$ is necessarily a translation. Since $\chi(\mathcal{O}_S)>0$, by the holomorphic Lefschetz fixed point formula, the fixed locus $S^{\sigma}$ is non-empty. Thus the translation $\sigma_B$ fixes a nonempty subset $f(S^{\sigma})\subset B$, and we infer that $\sigma_{B}=\id_{B}$.
\end{proof}

\end{document}